\DeclareFontShape{OT1}{cmtt}{m}{scit}{<->ssub*cmtt/m/n}{}
\newcolumntype{L}[1]{>{\raggedright\let\newline\\\arraybackslash\hspace{0pt}}m{#1}}
\newcolumntype{C}[1]{>{\centering\let\newline\\\arraybackslash\hspace{0pt}}m{#1}}
\newcolumntype{R}[1]{>{\raggedleft\let\newline\\\arraybackslash\hspace{0pt}}m{#1}}
\newtheorem{Theorem}{Theorem}[section]
\newtheorem{Remark}[Theorem]{Remark}
\newtheorem{Lemma}[Theorem]{Lemma}
\newtheorem{Definition}[Theorem]{Definition}
\newtheorem{Example}[Theorem]{Example}
\let\expandafter\oldproof\csname\string\proof\endcsname
\let\oldendproof\endproof
\renewenvironment{proof}[1][\proofname]{
\oldproof[\ttfamily\scshape \bf #1.]
}{\oldendproof}
\def\ve{\varepsilon}
\def\dom{{\rm dom}\,}
\def\B{{\rm I\!B}}
\def\disp{\displaystyle}
\def\tto{\rightrightarrows}
\def\ra{\rangle}
\def\la{\langle}
\def\e{\varepsilon}
\def\epsilon{\varepsilon}
\def\ov{\overline{v}}
\def\ou{\overline{u}}
\def\dom{\mbox{\rm dom}\,}
\def\O{\Omega}
\def \N{{\rm I\!N}}
\def \R{{\rm I\!R}}
\def\th{\theta}
\numberwithin{equation}{section}
\begin{document}

\title{\bf Inexact DC Algorithms in Hilbert Spaces with Applications to PDE-Constrained Optimization}
\author{P. D. Khanh\footnote{Group of Analysis and Applied Mathematics, Department of Mathematics, Ho Chi Minh City University of Education, Ho Chi Minh City, Vietnam. E-mail: khanhpd@hcmue.edu.vn.} \quad V. V. H. Khoa\footnote{Department of Mathematics, Wayne State University, Detroit, Michigan, USA. E-mail: khoavu@wayne.edu. Research of this author was partly supported by the US National Science Foundation under grant DMS-2204519.}\quad B. S. Mordukhovich\footnote{Department of Mathematics and Institute for Artificial Intelligence and Data Science, Wayne State University, Detroit, Michigan, USA. E-mail: aa1086@wayne.edu. Research of this author was partly supported by the US National Science Foundation under grant DMS-2204519 and by the Australian Research Council under Discovery Project DP250101112.}\quad D. B. Tran\footnote{Department of Mathematics, Rowan University, Glassboro, New Jersey, USA. E-mail: trandb@rowan.edu.}\quad N. V. Vo\footnote{Department of Mathematics and Statistics, Oakland University, Rochester, Michigan, USA. E-mail: nghiavo@oakland.edu.}}
\maketitle
\vspace*{-0.2in}
\begin{quote}
{\small\noindent {\bf Abstract.} In this paper, we design and apply novel inexact adaptive algorithms to deal with minimizing difference-of-convex (DC) functions in Hilbert spaces. We first introduce I-ADCA, an inexact adaptive counterpart of the well-recognized DCA (difference-of-convex algorithm), that allows inexact subgradient evaluations and inexact solutions to convex subproblems while still guarantees global convergence to stationary points. Under a Polyak-\L ojasiewicz type property for DC objectives, we obtain explicit convergence rates for the proposed algorithm. Our main application addresses elliptic optimal control problems with control constraints and nonconvex $L^{1-2}$ sparsity-enhanced regularizers admitting a DC decomposition. Employing I-ADCA and appropriate versions of finite element discretization leads us to an efficient procedure for solving such problems with establishing its well-posedness and error bound estimates confirmed by numerical experiments.\\
\vspace*{0.03in}\noindent
{\bf Keywords.}\ inexact difference-of-convex-functions algorithm, Polyak-\L ojasiewicz conditions, PDE-constrained optimization, sparse controls, finite element discretization  
\\
{\bf Mathematics Subject Classification (2020).}\ 49J20, 49M41, 49N05, 65B99, 90C26}
\end{quote}

\section{Introduction}
\label{sec:intro}
DC (difference-of-convex) programming has emerged as a powerful framework for tackling a broad class of nonconvex optimization problems that retain a rich variational structure. Since the seminal work by Pham Dinh Tao and Le Thi Hoai An \cite{DCA97}, the {\em difference-of-convex algorithm} (DCA) and its variants have been successfully applied to numerous models in statistics, machine learning, and signal processing including feature selection, robust regression, support vector machines, and binary logistic regression; see, e.g., the surveys in \cite{30yearDCA,openDCA} for a comprehensive account of developments. In many of these applications, the DC structure arises naturally from modeling considerations such as robustness, sparsity, and nonconvex regularization while DCA provides a simple yet effective first-order scheme that alternates between convex subproblems and exhibits good practical performance even in large-scale settings. More recently, Nesterov-type inertial techniques initiated in \cite{Nesterov} have been incorporated into DCA to further accelerate its convergence. In particular, the accelerated DCA (ADCA) introduced in \cite{ijcai} (see also \cite{PL24MOR}) combines DCA with an extrapolation step and has demonstrated state-of-the-art performance for sparse binary logistic regression and other finite-dimensional DC models.

Despite this progress, various existing DC algorithms and their accelerated and/or inexact variants \cite{amp24,AFV18,fmss26,30yearDCA,openDCA,DCA97,PHLH22,ijcai,PL24MOR} are formulated and analyzed in finite-dimensional Euclidean spaces, with a few exceptions as in \cite{Auchmuty89,Yen24,Yen23,ZSYD24}. However, many modern applications in inverse problems, optimal control, and PDE-constrained optimization are inherently infinite-dimensional being posed naturally in Hilbert (sometimes in more general) spaces. Typical examples include distributed optimal control of semilinear elliptic and parabolic equations \cite{Antil,hinter02,Reyes,Troltzsch,Ulbrich11}, and variational problems with function-space sparsity constraints \cite{Stadler09,Wachsmuth09}. In such settings, a direct “discretize then optimize” approach may obscure the underlying DC structure at the continuous level and may lead to algorithms whose behavior depends in subtle ways on the discretization. This motivates the development of DC algorithms {\em directly in Hilbert spaces}, together with convergence theory and rate estimates that are robust with respect to discretization. To the best of our knowledge, a systematic development of {\em inexact DCA-type methods} to Hilbert spaces involving inexact computations in both subgradient evaluations and subproblem solving, has not yet been carried out.

A second major motivation for this work comes from {\em sparse optimal control of elliptic PDEs}. Consider, for instance, the classical distributed optimal control problem for a linear elliptic equation with control constraints (see, e.g., \cite[page~4]{Troltzsch}):
\begin{eqnarray*}
\underset{y \in H_0^1(\Omega),\, u \in L^2(\Omega)}{\operatorname{minimize}} && \frac{1}{2} \|y-y_d\|_{L^2(\Omega)}^2+\frac{\alpha}{2} \|u-u_d\|^2_{L^2(\Omega)} \\
\text {subject to } && -\Delta y=u \quad\quad \text { in } \Omega, \\
&& \beta_1 \leq u \leq \beta_2 \quad \text { a.e. in } \Omega,
\end{eqnarray*}
where the control $u$ enters as a source term, and where the aim is to track a desired state $y_d$. When a standard quadratic control cost is used, optimal controls are typically non-sparse and spread over the entire domain. To promote {\em sparsity} (and hence to reduce computation costs), Stadler \cite{Stadler09} augments the objective with a {\em nonsmooth term} $\beta\|u\|_{L^1 (\Omega)}$, which drives the optimal control to zero on subregions where further reduction of the cost is negligible; the threshold for what counts as negligible is set by $\beta>0$. In those areas, no actuation is applied. This mechanism is central to optimal placement of control devices, e.g., on piezoelectric plates associated with linear material laws. The reader is referred to \cite{CasasSIOPT12,HandbookNA,Wachsmuth09} and the monographs \cite{Ulbrich09,Ulbrich11,Reyes} for further developments and error bound analysis. The resulting $L^1$-regularized control problems have been extensively studied over the last decade, both analytically and numerically, and form a well-established benchmark in PDE-constrained optimization.

At the same time, there exists mounting evidence from finite-dimensional compressed sensing and imaging that the difference of the $\ell^1$ and $\ell^2$ norms, 
\begin{equation*}
\ell^{1-2}(x) := \|x\|_1 - \|x\|_2, \quad x\in \R^n,
\end{equation*}
can promote sparsity more effectively than the plain $\ell^1$ norm in highly coherent settings. In particular, it is shown in \cite{Lou} that $\ell^{1-2}$-based models yield sparser solutions and improved recovery performance in several compressed-sensing and image-denoising tasks. This suggests replacing the $L^1$ control cost in PDE-constrained optimization by an $L^{1-2}$-type functional as
\begin{equation*}
L^{1-2}(u):=\|u\|_{L^1 (\Omega)} - \|u\|_{L^2 (\Omega)},\quad u\in L^2 (\Omega),
\end{equation*}
with the goal of enhancing sparsity at the level of distributed controls. The resulting sparsity-enhancing regularizer is nonsmooth and nonconvex, but it admits a DC decomposition and hence fits into the DC programming framework. Quite recently, a certain inexact DCA with a sieving strategy (iDCA) has been proposed in \cite{ZSYD24} for discretized PDE-constrained optimal control problems with an $L^{1-2}$ control cost. 

The present paper aims to bridge these developments by formulating and analyzing inexact adaptive DC algorithms directly in Hilbert spaces and by applying them to {\em $L^{1-2}$–regularized elliptic optimal control} problems. We consider general DC programs of the form
\begin{equation*}
\min_{u\in \mathcal{H}} f(u) := g(u) - h(u) 
\end{equation*}
defined on an arbitrary Hilbert space $\mathcal{H}$, where $g$ and $h$ are proper, lower semicontinuous, and convex. Our first main contribution is to develop the {\em inexact adaptive DC algorithm} (I-ADCA) for solving such problems in Hilbert spaces. The proposed scheme incorporates a general form of Nesterov-type extrapolation step, and it allows for inexactness in two places: (i) $\varepsilon$–subgradients of $h$ computed at ``extrapolated'' points, and (ii) $\varepsilon$–approximate minimizers of the associated convex subproblems involving $g$. At each iteration, the inexactness levels are controlled online by a backtracking-type (adaptive) procedure. More precisely, I-ADCA adjusts the accuracy parameter at each iteration to ensure that the inexactness is commensurate with the actual progress of the algorithm that leads us to a family of estimates in the form
\begin{equation*}
\varepsilon_k \lesssim \|u^{k+1}-w^k\|^2
\end{equation*}
and hence to a genuine descent of the objective values. Building upon advanced tools of convex and variational analysis in Hilbert spaces \cite{Mordukhovich06,nam,Mordukhovich24,Zalinescu}, we prove that I-ADCA is well-defined and preserves fundamental convergence features of the exact accelerated DCA \cite{ijcai} developed in finite dimensions. Under additional assumptions, most notably, a {\em Polyak-\L ojasiewicz type} property for the objective $f$ and local Lipschitz continuity of either $\nabla g$ or $\nabla h$, we establish explicit convergence rates for the objective values generated by I-ADCA. To the best of our knowledge, this is the {\em first convergence-rate} result for a DC algorithm that simultaneously incorporates {\em acceleration, inexactness, and infinite dimensionality} of the space in question.

Yet another major contribution of this paper is applying the proposed scheme to {\em elliptic optimal control} problems with {\em $L^{1-2}$ sparsity-enhanced regularization}. Consider the problem
\begin{equation*}
\min _{y \in H_0^1(\Omega),\, u \in L^2(\Omega)}\left\{\frac{1}{2}\left\|y-y_d\right\|_{L^2(\Omega)}^2+\frac{\alpha}{2}\left\|u-u_d\right\|_{L^2(\Omega)}^2+\beta\left(\|u\|_{L^1(\Omega)}-\|u\|_{L^2(\Omega)}\right)\right\}
\end{equation*}
subject to a linear elliptic PDE and pointwise control bounds; see \eqref{eq:mainprob} for the precise formulation. We show that, in the Hilbert space setting, the associated cost functional can be written in the form $f = g - h$ with $g$ being strongly convex, and so the problem fits into our abstract I-ADCA framework. This allows us to derive the specialized inexact adaptive DC algorithm, denoted by I-ADCA-PDECO, for the PDE-constrained optimization problem \eqref{eq:mainprob}. Our analysis ensures well-posedness of the scheme, solvability and uniqueness of the exact subproblems, and finite-element error estimates that relate continuous and discrete controls. 

The rest of the paper is organized as follows. In Section~\ref{sec:prelim}, we recall some preliminary notions, which are broadly used in the subsequent material. Section~\ref{sec:i-ADCA} introduces the inexact adaptive DC algorithm I-ADCA and discusses some of its properties. In Section~\ref{conver-inexact}, we establish global convergence and convergence rates of the proposed algorithm under a version of the Polyak-\L ojasiewicz condition in Hilbert spaces. Section~\ref{sec:appl-PDECO} is devoted to optimal control of elliptic equations with $L^{1-2}$ sparsity-enhanced regularization and presents an algorithm for its solution based on applications of the inexact adaptive DCA developed above. In Section~\ref{finite-elem}, we justify a finite element scheme for the numerical solution of the optimal control problem  and present the results of numerical experiments. The concluding Section~\ref{conc} summarizes the main achievements of the paper and discusses some directions of our future research.\vspace*{-0.05in}

\section{Preliminaries}\label{sec:prelim}
\vspace*{-0.05in}

In this section, we recall for the reader's convenience several basic notions with the standard notation broadly used below.
In the framework of Hilbert spaces,  the symbols `$\rightarrow$' and `$\rightharpoonup$' signify the {\em strong and weak convergence}, respectively. Let $\la \cdot,\cdot\ra_{\mathcal{H}^*,\mathcal{H}}$ denote the {\em duality pairing} between a Hilbert space $\mathcal{H}$ and its continuous dual $\mathcal{H}^*$, i.e., 
$$
\la u^*,u\ra_{\mathcal{H}^*,\mathcal{H}}:=u^*(u) \text{ for every }
u^*\in \mathcal{H}^* \text{ and }u\in \mathcal{H}.
$$
The {\em diameter} of a set $\Omega\subset \mathcal{H}$ is defined as 
$$
\mathrm{diam}(\Omega):=\sup_{u,v\in \Omega}\|u-v\|_{\mathcal{H}}.
$$
Letting $u\in \mathcal{H}$ and $\Omega\subset \mathcal{H}$, denote 
$$
\mathrm{dist}(u;\Omega):=\inf_{v\in \Omega}\|u-v\|
$$ 
the {\em distance} from $u$ to $\Omega$. In $\mathcal{H}$, $B(u,r)$ and $\B(u,r)$ stand for the {\em open} and {\em closed ball} centered at $u$ with radius $r>0$, respectively, while $B$ and $\B$ signify in this way the open and closed unit ball in $\mathcal{H}$. Unless otherwise stated, all functions $f:\mathcal{H}\to \overline{\R}:=(-\infty,\infty]$ under consideration are {\em proper}, i.e., their {\em effective domain}
\begin{equation*}
\dom f := \{u\in \mathcal{H}\mid f(u) < \infty\}
\end{equation*}
is nonempty. An extended-real-valued function $f:\mathcal{H}\to \overline{\R}$ is called {\em lower semicontinuous} (l.s.c.) {\em at} $\overline{u}\in \dom f$ if for any sequence $u_k \to \overline{u}$, we have 
\begin{equation*}
\liminf_{k\to \infty} f(u_k) \ge f(\overline{u}).
\end{equation*}
The function $f$ is said to be lower semicontinuous if it is l.s.c. at any point in its domain. Further, $f$ is {\em convex} on ${\cal H}$ if 
\begin{equation*}
f\big(\lambda u + (1-\lambda)v\big)\le \lambda f(u) + (1-\lambda)f(v)\;\mbox{ for all }\;u,v\in \mathcal{H}\;\mbox{ and }\;\lambda\in [0,1].
\end{equation*}
The {\em modulus of strong convexity} of $f$ (in the sense of Polyak \cite{Polyak}), denoted as $\sigma_f$, is defined as the supremum of all $\sigma\ge 0$ for which the inequality
\begin{equation}\label{eq:strong-convex}
f\big(\lambda x +(1-\lambda)y\big) \le \lambda f(x) + (1-\lambda)f(y) -\dfrac{\sigma}{2}\lambda (1-\lambda)\|x-y\|^2
\end{equation}
holds whenever $x,y\in \mathcal{H}$ and $\lambda\in [0,1]$. It is said that $f$ is {\em strongly convex} if $\sigma_f >0$.

Given two Hilbert spaces $(\mathcal{H},\la \cdot,\cdot\ra_{\mathcal{H}})$ and $(\mathcal{K},\la \cdot,\cdot\ra_{\mathcal{K}})$, we endow the product space $\mathcal{H}\times \mathcal{K}$ with the  {\em inner product} defined by
\begin{equation*}
\la (u_1,v_1),(u_2,v_2)\ra_{\mathcal{H}\times \mathcal{K}}:= \la u_1,u_2\ra_{\mathcal{H}} + \la v_1,v_2\ra_\mathcal{K},
\end{equation*}
which induces the {\em product norm}
\begin{equation*}
\|(u,v)\|_{\mathcal{H}\times \mathcal{K}}:= \sqrt{\|u\|_{\mathcal{H}}^2 + \|v\|_{\mathcal{K}}^2}.
\end{equation*}
By $\mathscr{L}(\mathcal{H},\mathcal{K})$, we understand the normed space of linear bounded operators from $\mathcal{H}$ into $\mathcal{K}$.

\section{Inexact Adaptive DC Algorithm in Hilbert Spaces}\label{sec:i-ADCA}

Given a Hilbert space $\big(\mathcal{H},\la\cdot,\cdot\ra\big)$, a {\em DC minimization problem} on $\mathcal{H}$ is written as
\begin{equation}\label{eq:DC}
\inf_{u\in \mathcal{H}} \big[ f(u):= g(u)-h(u)\big],
\end{equation}
where $g$ and $h$ belong to the class $\Gamma (\mathcal{H})$ of proper l.s.c.  convex extended-real-valued functions on $\mathcal{H}$. In this case, $f$ is called a {\em DC function} while $g,h$ are called its {\em DC components}. 

A point $u\in \mathcal{H}$ is called a {\em critical point} of \eqref{eq:DC} if 
\begin{equation}\label{critical}
\partial g(u)\cap \partial h(u)\neq \varnothing,
\end{equation}
where the {\em subdifferential of convex analysis} is defined by
\begin{equation}\label{eq:convexsubdiff}
\partial g(u):=\big\{v\in \mathcal{H}\mid \la v, u'-u\ra \le g(u')-g(u)\, \text{ for all }\,u'\in \mathcal{H}\big\},\quad u\in \mathcal{H}.
\end{equation}

As is well known, DCA is a {\em descent method} without line search designed to find critical points of DC problems. Owing to its simplicity, efficiency, and robustness, DCA has proven to be highly effective for addressing large-scale nonsmooth nonconvex programs; see Section~\ref{sec:intro}. For problem \eqref{eq:DC}, it computes iteratively
\begin{equation}\label{DCA-ite}
\begin{cases}
v^k \in \partial h(u^k),\\
u^{k+1} \in \operatornamewithlimits{argmin}\limits_{u\in \mathcal{H}} \Big[g(u) - \la v^k,u\ra\Big],
\end{cases}
\end{equation}
where the {\em subproblem} of finding $u^{k+1}\in \mathrm{argmin}_{u\in \mathcal{H}} \Big[g(u) - \la v^k,u\ra\Big]$ is a convex program obtained from \eqref{eq:DC} by replacing the convex component $h$ with its affine minorization via taking the subgradient $v^k \in \partial h(u^k)$.\vspace*{0.05in}

The following {\em accelerated DCA} (ADCA), introduced in \cite{ijcai} in finite dimensions, incorporates {\em Nesterov’s acceleration scheme} into its construction to boost the performance of DCA. 

\begin{algorithm}[H]
\caption{ADCA: Accelerated DCA for solving \eqref{eq:DC}}
\textbf{Initialization:} {$z^0 = u^0=u^{-1} \in \dom f$, $q \in \mathbb{N}\cup \{0\}$, $t_0 = 1$, $k = 0$.}

\textbf{Repeat}
\begin{algorithmic}[1]\label{algo:adca}
\STATE Compute $t_{k+1} = \frac{1 + \sqrt{1 + 4 t_k^2}}{2}$.
\STATE Compute $z^k = u^k + \dfrac{t_k - 1}{t_{k+1}} \big(u^k - u^{k-1}\big)$.
\STATE If $f(z^k) \leq \displaystyle \max_{t = \max\{0,\,k-q\}, \ldots, k} f(u^t)$, then set $w^k = z^k$. Otherwise, set $w^k = u^k$.
\STATE Choose $v^k \in \partial h(w^k)$.
\STATE Compute 
\[
u^{k+1} \in \operatornamewithlimits{argmin}\limits_{u \in \mathcal{H}} \big[\, g(u) - \langle v^k, u \ra \,\big].
\]
\STATE Set $k \gets k + 1$.
\end{algorithmic}
\textbf{Until} {Stopping criteria}.
\end{algorithm}

To address {\em inexactness} in subgradient computations, we proceed---similarly to \cite{PHLH22} in finite dimensions---by replacing the exact subdifferential \eqref{eq:convexsubdiff} of a convex function $f:\mathcal{H}\to \overline{\R}$ at $u\in \dom f$   by its $\e$-{\em subdifferential} counterpart defined by 
\begin{equation}\label{eq:e-subdiff}
\partial_{\e}f(u):=\big\{v\in \mathcal{H}\mid \la v,u'-u\ra \le f(u')-f(u)+\e \, \text{ for all }\, u'\in \mathcal{H}\big\},
\end{equation}
with $\partial_0 f(u)$ reducing to \eqref{eq:convexsubdiff}; see \cite{nam,Zalinescu} for more details on \eqref{eq:e-subdiff}. We say that
$u_{\e}\in \mathcal{H}$ is an {\em $\e$-solution} to the problem $\inf_{u\in \mathcal{H}} f(u)$ and denote it as $u_{\e}\in \e\text{-}\mathrm{argmin}_{u\in \mathcal{H}}f(u)$ if 
\begin{equation*}
f(u_{\e}) \le \inf_{u\in \mathcal{H}} f(u) + \e.
\end{equation*}
In what follows, our standing assumptions on the data of \eqref{eq:DC} are: 
\begin{enumerate}
\item[{\bf(H1)}] $\dom h = \mathcal{H}$ and $h$ is bounded on bounded subsets of $\mathcal{H}$.
\item[{\bf(H2)}] $\inf_{u\in \mathcal{H}}f(u)>-\infty$.
\end{enumerate}

Now we are ready to design our underlying {\em inexact adaptive DCA} (abbr.\ I-ADCA) in which the main {\em adaptive step} (Step~4) is inspired by the inexact gradient step from \cite[Algorithm~1]{KMT-OMS}.

\begin{algorithm}[H]
\caption{I-ADCA: Inexact Adaptive DCA for Solving \eqref{eq:DC}}\label{algo:i-adca}
\textbf{Initialization:} $u^0 \in \dom f$, reduction factor $\gamma \in (0,1)$, and $\varepsilon_0 \in (0,1]$. Set $k=0.$

\noindent\textbf{Repeat}
\begin{algorithmic}[1]
\STATE Choose $w^k$ such that $f(w^k)\le f(u^k)$.
\STATE Compute $v^k \in \partial_{\varepsilon_k} h(w^k)$.
\STATE Compute an $\varepsilon_k$-solution $u^{k+1}$ of
\[
 \inf_{u \in \mathcal{H}} \big[\, g(u) - \langle v^k, u \ra \,\big].
\]
\STATE {\bf While} 
$$
\varepsilon_k > \frac{\sigma_g + \sigma_h}{32}\big\|u^{k+1} - w^k\big\|^2,
$$
set  $\varepsilon_k \gets \gamma\,\varepsilon_k$ and {\bf restart} from Step~2 with the new $\varepsilon_k$.
\STATE Choose $\varepsilon_{k+1} \in (0,\varepsilon_k]$ and set $k \gets k + 1$.
\end{algorithmic}
\noindent\textbf{Until} Stopping criteria.
\end{algorithm}

\begin{Remark}\rm 
Multiple selections of $w^k$ can satisfy Step~1 in Algorithm~\ref{algo:i-adca}. A particularly effective option to improve performance, illustrated numerically in Section~\ref{finite-elem}, is to combine {\em Nesterov-type acceleration} with the “look-back” step as in Algorithm~\ref{algo:adca} (when $q=0$), or to use other momentum schemes such as, e.g., {\em Polyak's heavy-ball method} \cite{polyak}.
\end{Remark}

Next we show that the steps of Algorithm~\ref{algo:i-adca} are {\em 
well-defined}. To proceed, recall the classical notion of {\em Fenchel conjugate} for $f\in \Gamma (\mathcal{H})$ given by
\begin{equation*}
f^* (v) := \sup\big\{\la u,v\ra - f(u)\mid u\in \mathcal{H}\big\}, \quad v\in \mathcal{H}.
\end{equation*}
It follows from \cite[Theorem~2.3.3]{Zalinescu} that $f^* \in \Gamma(\mathcal{H})$. Considering further a set-valued mapping/multifunction $S\colon{\cal H}\tto{\cal H}$, denote by
\begin{equation*}
\dom S:= \{u\in \mathcal{H}\mid S(u) \neq \varnothing\} \quad \text{and}\quad \mathrm{rge}\, S := \bigcup_{u\in \mathcal{H}} S(u)
\end{equation*}
its {\em domain} and {\em range}, respectively. First we check the well-definiteness of Steps~2 and 3 of Algorithm~\ref{algo:i-adca} in the following remark.

\begin{Remark}\rm $\,$

{\bf(i)} At each iteration, the nonemptiness of $\partial_{\e_k}h(w^k)$ is guaranteed by assumption (H1) and \cite[Theorem~2.4.4(iii)]{Zalinescu}. Thus, Step~2 of Algorithm~\ref{algo:i-adca} can always be carried out. 

{\bf(ii)} Regarding Step~3, observe that (H2) implies that 
\begin{equation*}
\inf_{v\in \mathcal{H}} \big[h^*(v) - g^* (v)\big] = \inf_{u\in \mathcal{H}} \big[g(u) - h(u)\big] = \inf_{u\in \mathcal{H}} f(u) >-\infty,
\end{equation*}
and therefore $\dom h^* \subset \dom g^*$. Then it follows from Step~2 in Algorithm~\ref{algo:i-adca} and \cite[Theorem~2.4.4(iii)-(iv)]{Zalinescu} that
\begin{equation*}
v^k \in \mathrm{rge}\, \partial_{\e_k}h = \dom \partial_{\e_k} h^*  = \dom h^* \subset \dom g^* = \dom \partial_{\e_k} g^*.
\end{equation*}
This ensures that Step 3 is well-defined.
\end{Remark}\vspace*{0.05in}

Now we show that the adaptive procedure in Step~4 {\em stops after finitely many iterations}.

\begin{Lemma}\label{lem:adap-finite} Let the function $h$ in \eqref{eq:DC} be continuous. At each iteration in Algorithm~{\rm\ref{algo:i-adca}}, if $w^k$ is not a critical point \eqref{critical} of \eqref{eq:DC}, then the adaptive process terminates after a finite number of steps. Consequently, we have the estimate
\begin{equation}\label{eq:adaptive-rule}
\varepsilon_k \le \frac{\sigma_g + \sigma_h}{32}\big\|u^{k+1} - w^k\big\|^2.
\end{equation}
\end{Lemma}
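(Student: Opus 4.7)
The plan is to argue by contradiction. Suppose the adaptive loop at iteration $k$ never terminates, producing a sequence $\varepsilon_k^j = \gamma^j \varepsilon_k \downarrow 0$ with associated inexact subgradients $v^{k,j} \in \partial_{\varepsilon_k^j} h(w^k)$ and $\varepsilon_k^j$-solutions $u^{k+1,j}$ of the convex subproblem $\inf_{u\in\mathcal{H}}[g(u) - \langle v^{k,j},u\rangle]$, each violating the exit test and hence satisfying $\varepsilon_k^j > \frac{\sigma_g+\sigma_h}{32}\|u^{k+1,j}-w^k\|^2$. Since $\varepsilon_k^j\downarrow 0$ (and, implicitly, $\sigma_g+\sigma_h>0$), this forces the strong convergence $u^{k+1,j}\to w^k$ in $\mathcal{H}$. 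My target is then to extract a weak limit $\bar v$ of the subgradient sequence and to show that $\bar v$ lies simultaneously in $\partial g(w^k)$ and $\partial h(w^k)$, which would contradict the assumption that $w^k$ is not critical in the sense of \eqref{critical}.

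For the subgradient side I would invoke (H1): since $h$ is finite on $\mathcal{H}$ and bounded on bounded subsets, $h$ is continuous and its $\varepsilon$-subdifferential at any fixed point is locally bounded. Thus $\{v^{k,j}\}\subset\partial_{\varepsilon_k}h(w^k)$ is norm-bounded, and passing to a subsequence yields $v^{k,j}\rightharpoonup\bar v$ weakly in $\mathcal{H}$. Taking the weak limit in the $\varepsilon_k^j$-subdifferential inequality
$\langle v^{k,j}, u - w^k\rangle \le h(u) - h(w^k) + \varepsilon_k^j$, valid for every $u\in\mathcal{H}$, directly gives $\bar v\in \partial h(w^k)$.

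To place $\bar v$ also in $\partial g(w^k)$, I would pass to the limit in the $\varepsilon_k^j$-optimality inequality
\[
g(u^{k+1,j}) - \langle v^{k,j}, u^{k+1,j}\rangle \le g(u) - \langle v^{k,j}, u\rangle + \varepsilon_k^j \qquad \text{for all } u\in\mathcal{H}.
\]
Testing first with $u=w^k$ and using $|\langle v^{k,j}, u^{k+1,j}-w^k\rangle|\le \|v^{k,j}\|\,\|u^{k+1,j}-w^k\|\to 0$ yields $\limsup_j g(u^{k+1,j})\le g(w^k)$, which together with lower semicontinuity of $g$ promotes this to the full convergence $g(u^{k+1,j})\to g(w^k)$. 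Letting $j\to\infty$ in the displayed inequality for arbitrary $u$ then produces $g(w^k) - \langle \bar v, w^k\rangle \le g(u) - \langle \bar v, u\rangle$, so $\bar v\in \partial g(w^k)$. Combined with $\bar v\in \partial h(w^k)$, this contradicts the non-criticality of $w^k$, and the announced estimate \eqref{eq:adaptive-rule} is just the exit condition of the now-finite while-loop.

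The main obstacle is the joint passage to the limit in infinite dimensions: the subgradients $v^{k,j}$ converge only weakly, while the inexact optimality inequality couples them against sequences such as $u^{k+1,j}$. The decisive maneuver is to exploit the strong convergence $u^{k+1,j}\to w^k$ (forced precisely by the failure of the exit test, and hinging on $\sigma_g+\sigma_h>0$) in order to turn every weak-strong inner-product pairing into an honest limit, and to use the special test point $u=w^k$ to upgrade lower semicontinuity of $g$ along $\{u^{k+1,j}\}$ into genuine convergence of values.
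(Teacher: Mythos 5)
Your proposal is correct and follows essentially the same route as the paper's proof: contradiction, boundedness of the $\varepsilon$-subgradients via (H1), weak convergence to $\bar v\in\partial h(w^k)$, strong convergence $u^{k+1,j}\to w^k$ forced by the failed exit test, and a weak--strong limit passage in the inexact optimality inequality to conclude $\bar v\in\partial g(w^k)$. Your intermediate step of first establishing $g(u^{k+1,j})\to g(w^k)$ via the test point $u=w^k$ is a slightly more roundabout (but equally valid) version of the paper's single limit passage using lower semicontinuity of $g$, and your explicit remark that the argument hinges on $\sigma_g+\sigma_h>0$ is a point the paper leaves implicit.
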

\begin{proof}
Fix  $k\in \N\cup \{0\}$ and suppose on the contrary that there exist sequences $\{v^{k,l}\}_{l\in \N}\subset \mathcal{H}$, $\{u^{k+1,l}\}_{l\in \N}\subset \mathcal{H}$, and $\{\varepsilon_{k,l}\}_{l\in \N}\subset \R$ decreasing to $0$ such that for all $k\in\N$ we have
\begin{equation}\label{eq:contra1}
\begin{cases}
v^{k,l} \in \partial_{\varepsilon_{k,l}}h(w^k),\\
u^{k+1,l} \in \varepsilon_{k,l}\text{-}\mathrm{argmin}\big[g(u)-\la v^{k,l},u\ra\big],\\
\varepsilon_{k,l}>\dfrac{\sigma_g + \sigma_h}{32}\big\|u^{k+1,l}-w^k\big\|^2.
\end{cases}
\end{equation}
The first relationship in \eqref{eq:contra1} ensures the boundedness of $\{v^{k,l}\}$ due to the boundedness of $\partial_{\varepsilon_{k,1}}h(w^k)$, which follows from \cite[Theorem~2.4.9]{Zalinescu} and the continuity of $h:\mathcal{H}\to \R$. Without loss of generality, assume that $v^{k,l}\rightharpoonup\ov$ as $l\to \infty$, which together with the first inclusion in \eqref{eq:contra1} yields $\ov\in \partial h(w^k)$.
The last condition in \eqref{eq:contra1} clearly gives us $u^{k+1,l}\to w^k$ as $l\to \infty$. Moreover, from \eqref{eq:contra1} we have the inequalities
\begin{equation}\label{eq:before-lime}
\la v^{k,l},u-u^{k+1,l}\ra \le g(u) - g\big(u^{k+1,l}\big) + \varepsilon_{k,l}\quad \text{ for all } u\in \mathcal{H} \text{ and }\;l\in \N.
\end{equation}
Taking further into account the lower semicontinuity of $g$ and the weak convergence $v^{k,l}\rightharpoonup\ov$ as well as the strong one $\big(u^{k+1,l},\varepsilon_{k,l}\big)\to (w^k,0)$, we pass to the limits as $l\to \infty$ in \eqref{eq:before-lime} and arrive at the inequality
\begin{equation*}
\la\ov,u-w^k\ra \le g(u) - g(w^k)\quad \text{ for all } u\in \mathcal{H},
\end{equation*}
i.e., $\ov\in \partial g(w^k)$. This shows that $w^k$ is a critical point of problem \eqref{eq:DC}, a contradiction that completes the proof of the lemma.
\end{proof}

In the finite-dimensional setting, it is proved in \cite[Theorem~3(i)]{DCA97} that the DCA update rule \eqref{DCA-ite} generates a monotonically decreasing sequence $\{f(u^k)\}$ of objective values for problem \eqref{eq:DC}. Leveraging the {\em adaptive mechanism} (Step~4 in Algorithm~\ref{algo:i-adca}), we now establish a similar property in the presence of {\em inexactness} in the Hilbert space framework. The property of strong convexity \eqref{eq:strong-convex} is significantly utilized below.
\vspace*{0.05in}

\begin{Lemma}\label{lem:converge-stand-2}
Let $g,h\in \Gamma (\mathcal{H})$, and let the function $h$ be continuous. Suppose that Algorithm~{\rm\ref{algo:i-adca}} generates sequences $\{\e_k\}$, $\{u^k\}$, $\{v^k\}$, and $\{w^k\}$, where $w^k$ is not a critical point of \eqref{eq:DC} for each $k\in \N \cup \{0\}$. Then the following assertions hold.

{\bf(i)} For any $k\in \N \cup \{0\}$, we have the estimates \begin{equation}\label{eq:converge-stand-3}
\dfrac{\sigma_g + \sigma_h}{8} \big\|u^{k+1}-w^{k}\big\|^2\le f(w^k) - f(u^{k+1})\le f(u^k) - f(u^{k+1}).
\end{equation}

{\bf(ii)} If either $g$ or $h$ is strongly convex, then 
\begin{equation}\label{eq:sumfin}
\displaystyle\sum\limits_{k=0}^{\infty} \big\|u^{k+1} - w^{k} \big\|^2 <\infty,
\end{equation}
which implies that $\lim\limits_{k\rightarrow \infty}\big\|u^{k+1} - w^{k} \big\| =0$ and $\lim\limits_{k\to \infty}\e_k =0$.
\end{Lemma}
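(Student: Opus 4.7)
My plan is to prove assertion (i) by combining the two inexactness inclusions at $w^k$ with a midpoint strong-convexity trick, and then to absorb the $\varepsilon_k$-error using the adaptive bound \eqref{eq:adaptive-rule} from Lemma~\ref{lem:adap-finite}. First, I treat the subproblem side. Since $\psi_k(u):=g(u)-\la v^k,u\ra$ is $\sigma_g$-strongly convex and $u^{k+1}$ is an $\varepsilon_k$-solution of $\inf\psi_k$, I would apply the strong-convexity inequality \eqref{eq:strong-convex} to the convex combination $\lambda w^k+(1-\lambda)u^{k+1}$, use $\varepsilon_k$-optimality at $u^{k+1}$, divide by $\lambda$, and choose $\lambda=1/2$. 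This yields
\begin{equation*}
g(u^{k+1})-g(w^k)-\la v^k,u^{k+1}-w^k\ra\;\le\;-\frac{\sigma_g}{4}\|u^{k+1}-w^k\|^2+2\varepsilon_k.
\end{equation*}

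Second, I handle the $\varepsilon_k$-subgradient side in an entirely parallel way. Starting from $\la v^k,u'-w^k\ra\le h(u')-h(w^k)+\varepsilon_k$ with $u'=\lambda u^{k+1}+(1-\lambda)w^k$, exploiting the $\sigma_h$-strong convexity of $h$ via \eqref{eq:strong-convex}, dividing by $\lambda$, and again taking $\lambda=1/2$ produces
\begin{equation*}
\la v^k,u^{k+1}-w^k\ra\;\le\;h(u^{k+1})-h(w^k)-\frac{\sigma_h}{4}\|u^{k+1}-w^k\|^2+2\varepsilon_k.
\end{equation*}
Adding the two displays and rewriting the left-hand side as $f(u^{k+1})-f(w^k)$ gives
\begin{equation*}
f(w^k)-f(u^{k+1})\;\ge\;\frac{\sigma_g+\sigma_h}{4}\|u^{k+1}-w^k\|^2-4\varepsilon_k.
\end{equation*}
At this point the adaptive termination estimate \eqref{eq:adaptive-rule} from Lemma~\ref{lem:adap-finite} gives $4\varepsilon_k\le\tfrac{\sigma_g+\sigma_h}{8}\|u^{k+1}-w^k\|^2$, so I can absorb the $-4\varepsilon_k$ term into half of the quadratic term and obtain exactly the lower bound $\tfrac{\sigma_g+\sigma_h}{8}\|u^{k+1}-w^k\|^2$ claimed in \eqref{eq:converge-stand-3}. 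The second inequality in \eqref{eq:converge-stand-3} is immediate from Step~1 of Algorithm~\ref{algo:i-adca}, which enforces $f(w^k)\le f(u^k)$.

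For assertion (ii), I would simply telescope. If either $g$ or $h$ is strongly convex then $\sigma_g+\sigma_h>0$, and summing \eqref{eq:converge-stand-3} over $k=0,1,\dots,N$ yields
\begin{equation*}
\frac{\sigma_g+\sigma_h}{8}\sum_{k=0}^{N}\|u^{k+1}-w^k\|^2\;\le\;f(u^0)-f(u^{N+1})\;\le\;f(u^0)-\inf_{u\in\mathcal{H}}f(u),
\end{equation*}
which is finite by hypothesis (H2). Letting $N\to\infty$ gives \eqref{eq:sumfin}, whence $\|u^{k+1}-w^k\|\to 0$, and \eqref{eq:adaptive-rule} then forces $\varepsilon_k\to 0$.

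The main obstacle I foresee is the first step: the $\varepsilon$-subdifferential does not interact cleanly with strong convexity via a direct sum rule in Hilbert spaces, so one cannot simply read off the $\tfrac{\sigma_h}{2}\|\cdot\|^2$ term from $v^k\in\partial_{\varepsilon_k}h(w^k)$. The midpoint/$\lambda=1/2$ device above is what converts the strong-convexity modulus into the desired quadratic improvement while paying only a modest price of $2\varepsilon_k$ instead of $\varepsilon_k$—a loss that is exactly what motivates the factor $32$ (rather than, say, $16$) in the threshold of Step~4 of Algorithm~\ref{algo:i-adca}.
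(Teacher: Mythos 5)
Your proof is correct and follows essentially the same route as the paper: the midpoint ($\lambda=1/2$) strong-convexity argument applied to both the $\varepsilon_k$-subgradient inequality for $h$ and the $\varepsilon_k$-optimality of the subproblem, summation to get $\tfrac{\sigma_g+\sigma_h}{4}\|u^{k+1}-w^k\|^2\le f(w^k)-f(u^{k+1})+4\varepsilon_k$, absorption of $4\varepsilon_k$ via \eqref{eq:adaptive-rule}, and telescoping for (ii). The only cosmetic difference is that the paper first rewrites $\varepsilon_k$-optimality as the inclusion $v^k\in\partial_{\varepsilon_k}g(u^{k+1})$ and then repeats the same midpoint computation on $g$, whereas you apply it directly to the strongly convex subproblem objective $g(\cdot)-\la v^k,\cdot\ra$; the resulting inequality is identical.
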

\begin{proof}
We begin with verifying \eqref{eq:converge-stand-3} for a fixed number $k\in \N \cup \{0\}$. Algorithm~\ref{algo:i-adca} asserts that $v^k \in \partial_{\e_k} h(w^k)$, which together with \eqref{eq:e-subdiff} and \eqref{eq:strong-convex} yields
\begin{equation*}
\begin{aligned}
\Big\la v^k, \dfrac{1}{2}(u^{k+1}-w^k)\Big\ra&=\Big\la v^k, \Big(\dfrac{1}{2}u^{k+1}+\dfrac{1}{2}w^k\Big) - w^k\Big\ra \\
&\le h\Big(\dfrac{1}{2}u^{k+1}+\dfrac{1}{2}w^k\Big) - h(w^k)+\ve_k\\
&\le -\dfrac{\sigma_h}{8}\|u^{k+1}-w^k\|^2+ \dfrac{1}{2}h(u^{k+1}) +\dfrac{1}{2} h(w^k)-h(w^k) + \e_k \\
&= -\dfrac{\sigma_h}{8}\|u^{k+1}-w^k\|^2+ \dfrac{1}{2}h(u^{k+1}) -\dfrac{1}{2} h(w^k) + \e_k.
\end{aligned}
\end{equation*}
It follows therefore that 
\begin{equation}\label{eq:41}
\la v^k,u^{k+1}-w^k\ra + \dfrac{\sigma_h}{4}\big\|u^{k+1}-w^k\big\|^2 \le h(u^{k+1})-h(w^k)+2\e_k.
\end{equation}
Furthermore, Algorithm~\ref{algo:i-adca} tells us that $g(u^{k+1})-\la v^k,u^{k+1}\ra \le g(u)-\la v^k,u\ra + \e_k$ for all $u\in \mathcal{H}$, i.e., $v^k\in \partial_{\e_k}g(u^{k+1})$. Similarly to \eqref{eq:41}, we get
\begin{equation*}
\la v^k,w^{k}-u^{k+1}\ra + \dfrac{\sigma_g}{4}\big\|w^k-u^{k+1}\big\|^2 \le g(w^k)-g(u^{k+1})+2\e_k,
\end{equation*}
which  being added to \eqref{eq:41} implies that
\begin{equation}\label{eq:42}
\dfrac{\sigma_g + \sigma_h}{4}\big\|u^{k+1}-w^k\big\|^2\le f(w^k) - f(u^{k+1})+4\e_k.
\end{equation}
Employing \eqref{eq:adaptive-rule}, we deduce from \eqref{eq:42} that
\begin{equation*}
\begin{aligned}
\dfrac{\sigma_g + \sigma_h}{4}\big\|u^{k+1}-w^k\big\|^2 &\le f(w^k) - f(u^{k+1})+4\e_k\\
&\le f(w^k) - f(u^{k+1}) + \dfrac{\sigma_g + \sigma_h}{8}\|u^{k+1}-w^k\|^2.
\end{aligned}
\end{equation*}
Consequently, Algorithm~\ref{algo:i-adca} ensures that 
\begin{equation*}
\dfrac{\sigma_g + \sigma_h}{8}\big\|u^{k+1}-w^k\big\|^2 \le f(w^k) - f(u^{k+1}) \le f(u^k) - f(u^{k+1}),
\end{equation*}
which thus justifies assertion (i) .
    
To verify (ii), assume that either $g$ or $h$ is strongly convex, i.e., $\sigma_g +\sigma_h>0$. Summing up \eqref{eq:converge-stand-3} over $k=0,\ldots,N$ with $N\in \N$ yields 
\begin{equation*}
\begin{aligned}
\dfrac{\sigma_g + \sigma_h}{8} \displaystyle\sum\limits_{k=0}^N \big\| u^{k+1} - w^{k} \big\|^2 &\le \displaystyle\sum\limits_{k=0}^N \left[f(u^k) - f(u^{k+1}) \right] \\
&=f(u^0) - f(u^{N+1})\\
&\le f(u^0) - \inf_{u\in \mathcal{H}}f(u) .
\end{aligned}
\end{equation*}
Since $\sigma_g +\sigma_h>0$, we get in addition that
\begin{equation*}
\displaystyle\sum\limits_{k=0}^N \big\| u^{k+1} - w^{k} \big\|^2 \le \dfrac{f(u^0) - \inf_{u\in \mathcal{H}}f(u)}{(\sigma_g + \sigma_h)/8}.
\end{equation*}
Passing to the limit as $N\rightarrow \infty$ leads us to
\begin{equation*}
\displaystyle\sum\limits_{k=0}^{\infty} \big\| u^{k+1} - w^{k} \big\|^2 < \infty,
\end{equation*}
which readily implies that $\|u^{k+1}-w^k\|\to 0$ as $k\to \infty$. Then it follows from \eqref{eq:adaptive-rule} that $\e_k\to~0$ thus completing the proof of the lemma.
\end{proof}

Our first theorem addresses convergence of a {\em subsequence} of $\ve_k$-solutions of the {\em subproblems} in Algorithm~\ref{algo:i-adca} to a {\em critical point} of \eqref{eq:DC}.

\begin{Theorem}\label{theo:converge-stand-1} Assume in the setting of 
Lemma~{\rm\ref{lem:converge-stand-2}(ii)} that the sequence $\{u^k\}$ is bounded. Then the sequence $\{v^k\}$ is bounded as well, and for any subsequence $\big\{u^{k_j} \big\}$ of $\big\{ u^{k} \big\}$ strongly converging to some $\ou$, the limiting point $\ou$ is a critical one for \eqref{eq:DC}.
\end{Theorem}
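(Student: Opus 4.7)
The plan is to first establish boundedness of the subgradient sequence $\{v^k\}$, and then to pass to the limit in the two $\varepsilon$-subdifferential inclusions recorded in Steps~2 and 3 of Algorithm~\ref{algo:i-adca} along a suitably shifted subsequence, thereby recovering a single limiting vector $\ov\in\partial g(\ou)\cap\partial h(\ou)$.

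For boundedness, I would first invoke Lemma~\ref{lem:converge-stand-2}(ii) to obtain $\|u^{k+1}-w^k\|\to 0$; combined with the assumed boundedness of $\{u^k\}$, the triangle inequality forces $\{w^k\}$ to be bounded. Since $h$ is continuous with $\dom h=\mathcal{H}$ by assumption~(H1), it is locally Lipschitz on $\mathcal{H}$, and a standard size bound on the $\varepsilon$-subdifferential of a continuous convex function (e.g., \cite[Theorem~2.4.9]{Zalinescu}) gives a uniform estimate on $\partial_{\varepsilon_k}h(w^k)$ whenever $w^k$ lies in a bounded set and $\varepsilon_k\in(0,\varepsilon_0]\subseteq(0,1]$. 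This yields boundedness of $\{v^k\}$.

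For the critical point assertion, the key subtlety is an indexing shift. We are given $u^{k_j}\to\ou$, but in Algorithm~\ref{algo:i-adca} the subgradient $v^{k-1}$ is naturally attached to $w^{k-1}$ through $v^{k-1}\in\partial_{\varepsilon_{k-1}}h(w^{k-1})$ and to $u^k$ through $v^{k-1}\in\partial_{\varepsilon_{k-1}}g(u^k)$ (the second inclusion is a direct consequence of the $\varepsilon_{k-1}$-minimization step and is already exploited in the proof of Lemma~\ref{lem:converge-stand-2}). I would therefore work with the shifted index $k_j-1$: since $\|u^{k_j}-w^{k_j-1}\|\to 0$ by Lemma~\ref{lem:converge-stand-2}(ii), one obtains $w^{k_j-1}\to\ou$; and since the bounded sequence $\{v^{k_j-1}\}$ lies in a Hilbert space, a further subsequence (still indexed by $j$) satisfies $v^{k_j-1}\rightharpoonup\ov$ for some $\ov\in\mathcal{H}$.

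The final step is to pass to the limit in the two $\varepsilon$-subdifferential inequalities. On the $h$ side, continuity of $h$ combined with the weak--strong pairing ($v^{k_j-1}\rightharpoonup\ov$ and $u-w^{k_j-1}\to u-\ou$ strongly for every fixed $u$) gives $\langle\ov,u-\ou\rangle\le h(u)-h(\ou)$ for all $u\in\mathcal{H}$, so $\ov\in\partial h(\ou)$. On the $g$ side, taking $\liminf$ of the right-hand side and using lower semicontinuity of $g$ at $\ou$ yields $\langle\ov,u-\ou\rangle\le g(u)-g(\ou)$ for all $u$, hence $\ov\in\partial g(\ou)$. I expect the main hurdle to be precisely this indexing shift: a naive attempt to pair $v^{k_j}$ with the given $u^{k_j}$ fails because $v^{k_j}$ is tied to $w^{k_j}$ and $u^{k_j+1}$ rather than to $u^{k_j}$, and correctly identifying the shifted subsequence is what makes the entire limit passage go through.
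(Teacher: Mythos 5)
Your proposal is correct and follows essentially the same route as the paper's proof: boundedness of $\{w^k\}$ and hence of $\{v^k\}$ via (H1) and local boundedness of $\varepsilon$-subdifferentials of the continuous function $h$, the index shift to $k_j-1$ pairing $v^{k_j-1}$ with both $w^{k_j-1}$ and $u^{k_j}$, weak convergence of $\{v^{k_j-1}\}$ to some $\ov$, and the weak--strong limit passage in the two $\varepsilon$-subdifferential inequalities. The only cosmetic difference is on the $g$ side, where you conclude $\ov\in\partial g(\ou)$ directly from lower semicontinuity of $g$ in a single $\liminf$, whereas the paper first establishes $g(u^{k_j})\to g(\ou)$ and then substitutes; both variants are valid.
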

\begin{proof}
If $\{u^k\}$ is bounded, then Lemma~\ref{lem:converge-stand-2}(ii) yields the boundedness of $\{w^k\}$. Then it follows from Step~2 in Algorithm~\ref{algo:i-adca}, assumption (H1), and \cite[Theorem~2.4.13]{Zalinescu} that the sequence $\{v^k\}$ is bounded as well. 

Let $\{u^{k_j}\}$ be a subsequence of $\left\{u^{k} \right\}$ that converges to some $\ou\in \mathcal{H}$. Then it follows from Lemma~\ref{lem:converge-stand-2}(ii) that $\lim\limits_{j\rightarrow \infty} w^{k_j-1} =\ou$. By the boundedness of $\{v^{k_j-1} \}$ in $\mathcal{H}$, we may assume its weak convergence to some $\ov\in \mathcal{H}$. Algorithm~\ref{algo:i-adca} gives us the subgradient estimates
\begin{equation*}
\la v^{k_j-1},u-w^{k_j-1}\ra \le h(u)-h(w^{k_j-1})+\e_{k_j-1}\;\mbox{ for all }\;j\in \N\;\mbox{ and }\;u\in \mathcal{H}.
\end{equation*}
Passing there to the limit as $j\rightarrow\infty$ and using the convergence $w^{k_j-1} \rightarrow\ou$ and $v^{k_j-1} \rightharpoonup\ov$ lead us to the relationships
\begin{equation*}
\begin{aligned}
\la\ov,u-\ou\ra &\le h(u) + \limsup_{j\to \infty} \big[-h(w^{k_j-1})\big]\\
&= h(u) -\liminf_{j\to \infty} h(w^{k_j-1}) \le h(u) - h(\ou),
\end{aligned}
\end{equation*}
and therefore $\ov\in \partial h(\ou)$. On the other hand, recall from Step~3 in Algorithm~\ref{algo:i-adca} that 
\begin{equation}\label{conse-s5}
g(u^{k_j}) - \la v^{k_j-1},u^{k_j}\ra \le g(u)-\la v^{k_j-1},u\ra+\e_{k_j-1}\;\text{ for all }\;j\in \N\;\text{ and } u\in \mathcal{H}.
\end{equation}
By letting $j\rightarrow \infty$ in \eqref{conse-s5}, it follows that
\begin{equation}\label{eq:limsupG}
\limsup_{j\rightarrow \infty} g(u^{k_j}) \le  \la\ov,\ou-u\ra +g(u) \quad \text{for any}\quad u\in \mathcal{H},
\end{equation}
and hence $\limsup_{j\rightarrow \infty} g(u^{k_j}) \le g(\ou)$. The latter estimate together with the imposed lower semicontinuity of $g$ implies that $g(u^{k_j}) \rightarrow g(\ou)$ as $j\rightarrow \infty$. Consequently, \eqref{eq:limsupG} yields $$g(\ou)\le \la\ov,\ou-u\ra + g(u) \quad \text{for all}\quad u\in \mathcal{H},$$ i.e., $\ov\in \partial g(\ou)$. This shows that $\ou$ is a critical point of \eqref{eq:DC} and thus completes the proof.
\end{proof}\vspace*{-0.1in}

\section{Global Convergence and Convergence Rates of Inexact Adaptive DCA}\label{conver-inexact}

In this section, we establish global convergence with explicit convergence rates for Algorithm~\ref{algo:i-adca} under some additional assumptions. First, we recall the needed notions from infinite-dimensional variational analysis; see, e.g., \cite{Mordukhovich06,Mordukhovich24} and the references therein. For a proper function $f:\mathcal{H} \to \overline{\R}$, the (Fr\'echet) {\em regular subdifferential} of $f$ at $\ou\in\dom f$ is  
\begin{equation}\label{FrechetSubdifferential}
\partial_F f(\ou):=\left\{v\in \mathcal{H}\;\Big|\;\liminf\limits_{u\to \overline u}\frac{\varphi(u)-\varphi(\ou)-\langle v,u-\ou\rangle}{\|u-\ou\|}\ge 0 \right\},
\end{equation}
while the (Mordukhovich) {\em limiting subdifferential} of $f$ at $\ou$ is defined by
\begin{equation}\label{MordukhovichSubdifferential}
\partial_M f(\overline{u}):=\big\{\ov \in \mathcal{H}\; \big|\;\exists \text{ sequences }v_k \rightharpoonup \ov,\; u_k \xrightarrow{f} \ou \text{ with }v_k \in \partial_F f(u_k),\;k\in \N\big\},
\end{equation}
where $u_k\xrightarrow{f}\ou$ signifies that $u_k\rightarrow\overline{u}$ and $f(u_k)\rightarrow f(\overline{u})$ as $k\to \infty$. Both constructions $\partial_F f(\ou)$ and $\partial_M f(\overline{u})$ reduce to the classical gradient $\{\nabla f(\ou)\}$ when $f$ is of class ${\cal C}^1$ around $\ou$. They also agree with the convex subdifferential \eqref{eq:convexsubdiff} for convex functions $f$. A vector $\ou\in \mathcal{H}$ is called a {\em limiting/(M)ordukhovich-stationary point} of $f$ if 
\begin{equation}\label{defi:M-crit}
0\in \partial_M f(\ou).
\end{equation}

A central role in our global convergence and convergence rate results is played by the following version of the (generalized) {\em Polyak-\L ojasiewicz property}.

\begin{Definition}\label{defi:Loja}   
A proper function $f:\mathcal{H} \rightarrow \overline{\R}$ has the {\sc Polyak-\L ojasiewicz property} if:

{\bf(i)} $f|_{\mathrm{dom} f}$ is continuous on $\dom f$.

{\bf(ii)} For each $M$-stationary point $\ou$ of $f$, there exist $C>0$, $\varepsilon>0$, and $\theta \in [0,1)$ such that 
\begin{equation}\label{pl}
\left| f(u) - f(\ou) \right|^{\theta} \le C \|v\|\;\mbox{ for all }\;u\in B(\ou, \varepsilon)\;\mbox{ and }\;v\in \partial_M f(u),
\end{equation}

with the convention that $0^0 := 0$.
\end{Definition}

When $\theta=1/2$, property \eqref{pl} was introduced by Polyak \cite{Polyak} for functions of class ${\cal C}^{1,1}$ (i.e., continuously differentiable with Lipschitzian gradients) in Hilbert spaces. Polyak's motivation and main result \cite[Theorem~4]{Polyak} were to establish linear convergence of the gradient descent method for ${\cal C}^{1,1}$ functions in the Hilbert space setting. Independently, \L ojasiewicz \cite{Lojasiewicz} defined a version of \eqref{pl} for analytic functions in finite-dimensions for arbitrary $\theta\in(0,1)$ by using tools of semialgebraic geometry without any applications to optimization. Much later, Kurdyka \cite{Kurdyka} extended the semialgebraic approach by \L ojasiewicz to the general class of o-minimal structures. Extensions of \eqref{pl} to nonsmooth functions via various subgradient mappings were proposed under different names and applied to optimization problems in, e.g.,  \cite{AFV18,amp24,AB09,bmmn25,KMT-OMS,Mordukhovich24} and many other publications in finite-dimensional spaces. Hilbert space extensions were given in \cite{BDLM09,LojaHilbert11} and the references therein. In the aforementioned publications, the reader can find broad classes of functions for which the Polyak-\L ojasiewicz property and its generalizations hold.\vspace*{0.05in}

Recall further that $\ou\in \mathcal{H}$ is a {\em strong cluster point} of a sequence $\{u_k\}\subset \mathcal{H}$ if there exists a subsequence of $\{u_k\}$ that strongly converges to $\ou$. Before deriving the main result of this section, we present two lemmas of their own interest.

\begin{Lemma}\label{lem:iadca-rate-lem2}
In the setting of Theorem~{\rm\ref{theo:converge-stand-1}}, let $\{u^k\}$, $\{v^k\}$, and $\{w^k\}$ be generated by Algorithm~{\rm\ref{algo:i-adca}}. Assume in addition that $f=g-h$ is l.s.c., $\{u^k\}$ is bounded, and the set $\Omega$ of strong cluster points of $\{u^k\}$ is nonempty. We have the following assertions.

{\bf(i)} $f(u^k) \searrow f(\ou)$ as $k\to \infty$ for any $\ou\in \Omega$. Thus $f$ is constant on $\Omega$ with the value $\bar f$. 

{\bf(ii)} Suppose that $f$ has the Polyak-\L ojasiewicz property, $h$ is differentiable, and $\Omega$ is compact. Then there exist $C,\varepsilon>0$ and $\theta \in [0,1)$ such that when  $\mathrm{dist}(u;\Omega) \le \varepsilon$, we have the estimate
\begin{equation*} 
\left| f(u) - \bar f\right|^{\theta} \le C \|v\|\; \text{ whenever }\;v\in \partial_M f(u).
\end{equation*}

{\bf(iii)} If $f$ has the Polyak-\L ojasiewicz property while $g$ is differentiable and $\Omega$ is compact, then there exist $C,\varepsilon>0$ and $\theta \in [0,1)$ such that when  $\mathrm{dist}(u;\Omega) \le \varepsilon$, we have
\begin{equation*}
\left| f(u) - \bar f \right|^{\theta} \le C \|v\|\; \text{ whenever }\;v\in \partial_M (-f)(u).
\end{equation*}
\end{Lemma}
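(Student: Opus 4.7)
The plan is to dispatch part~(i) by a monotone-convergence plus subsequence argument and to handle parts~(ii) and~(iii) by localizing the Polyak–\L ojasiewicz inequality at every element of $\Omega$ and then patching the local bounds through a finite-cover argument.

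\smallskip
\textbf{Part (i).} The right-hand inequality in \eqref{eq:converge-stand-3} of Lemma~\ref{lem:converge-stand-2}(i) gives $f(u^{k+1})\le f(u^k)$; combined with (H2), this forces $f(u^k)\searrow\bar f$ for some $\bar f\in\R$. Fix $\ou\in\Omega$ and a subsequence $u^{k_j}\to\ou$ strongly. Continuity of $h$ yields $h(u^{k_j})\to h(\ou)$, while the passage to the limit already carried out inside the proof of Theorem~\ref{theo:converge-stand-1} — where the bound $\limsup g(u^{k_j})\le g(\ou)$ obtained from the $\varepsilon_{k_j-1}$-optimality of $u^{k_j}$ is matched by the l.s.c.\ lower bound $\liminf g(u^{k_j})\ge g(\ou)$ — delivers $g(u^{k_j})\to g(\ou)$. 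Hence $f(u^{k_j})\to f(\ou)$, and matching this against the monotone limit forces $f(\ou)=\bar f$; this holds for every $\ou\in\Omega$, proving the constancy claim.

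\smallskip
\textbf{Parts (ii) and (iii).} These are parallel in form; I outline (ii) and indicate the change for (iii). The first step is to recognize each $\ou\in\Omega$ as an $M$-stationary point of $f$: Theorem~\ref{theo:converge-stand-1} gives $\partial g(\ou)\cap\partial h(\ou)\neq\varnothing$, and since $h$ is differentiable, this reduces to $\nabla h(\ou)\in\partial g(\ou)$; the exact limiting-subdifferential sum rule for the convex-plus-smooth decomposition $f=g+(-h)$ then yields $0\in\partial g(\ou)-\nabla h(\ou)=\partial_M f(\ou)$. The second step applies the PL property at each such $\ou$, producing local parameters $(\varepsilon_{\ou},C_{\ou},\theta_{\ou})$ for which \eqref{pl} holds on $B(\ou,\varepsilon_{\ou})$. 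The third step uses compactness of $\Omega$ to extract a finite subcover $\{B(\ou_i,\varepsilon_i/2)\}_{i=1}^{N}$ of the open cover $\{B(\ou,\varepsilon_{\ou}/2):\ou\in\Omega\}$; setting $\varepsilon^{*}:=\tfrac12\min_i\varepsilon_i$, any $u$ with $\dist(u;\Omega)\le\varepsilon^{*}$ lies in some $B(\ou_i,\varepsilon_i)$, so — using $f(\ou_i)=\bar f$ from part~(i) — the local PL inequality yields $|f(u)-\bar f|^{\theta_{\ou_i}}\le C_{\ou_i}\|v\|$ for every $v\in\partial_M f(u)$. Finally, continuity of $f$ (built into the PL hypothesis) together with $f\equiv\bar f$ on $\Omega$ permits, after a further shrinking of $\varepsilon^{*}$, the uniform bound $|f(u)-\bar f|\le 1$ on the neighborhood, and setting $\theta:=\max_i\theta_{\ou_i}\in[0,1)$ and $C:=\max_i C_{\ou_i}$ collapses the finitely many local exponents into the single desired estimate. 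For part~(iii), the same scheme applies with the roles of $g$ and $h$ swapped: smoothness of $g$ converts criticality into $\nabla g(\ou)\in\partial h(\ou)$, hence $0\in\partial h(\ou)-\nabla g(\ou)=\partial_M(-f)(\ou)$, and the PL-type inequality appropriate to $\partial_M(-f)$ is used (with $|{-f(u)}-(-\bar f)|=|f(u)-\bar f|$), the remaining compactness patching being identical.

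\smallskip
\textbf{Main obstacle.} The conceptual core is standard, but the delicate point is the final patching step: the locally supplied exponents $\theta_{\ou_i}$ generally differ, and merging them into a single $\theta\in[0,1)$ requires the one-sided control $|f(u)-\bar f|\le 1$ uniformly on a neighborhood of the compact set $\Omega$. This is precisely where the continuity of $f$ from the PL definition — together with the constancy $f\equiv\bar f$ on $\Omega$ delivered by part~(i) — is used in an essential way; the remainder reduces to the sum rule for the limiting subdifferential of a convex-plus-smooth function and a routine finite-cover argument.
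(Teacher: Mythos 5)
Your proposal is correct and follows essentially the same route as the paper's own proof: monotone decrease plus the $g(u^{k_j})\to g(\ou)$ limit from Theorem~\ref{theo:converge-stand-1} for part (i), then $M$-stationarity of $\Omega$ via the convex-plus-smooth sum rule, pointwise application of the Polyak--\L ojasiewicz inequality, and a finite-cover patching with $\theta:=\max_i\theta_i$, $C:=\max_i C_i$ justified by the uniform bound $|f(u)-\bar f|<1$ near $\Omega$. The only cosmetic difference is that in (i) you invoke continuity of $h$ directly where the paper uses lower semicontinuity of $h$ together with that of $f$; both are available in the stated setting.
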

\begin{proof}
To verify (i), we get from Lemma~\ref{lem:converge-stand-2}(i) that 
$\{f(u^k)\}$ is a decreasing sequence. Then it follows from assumption (H2) that $\{f(u^k)\}$ converges. Pick further some $\ou\in \Omega$ and show that $f(u^k)\to f(\ou)$ as $k\to \infty$. By the definition of $\Omega$, there exists a subsequence $\{u^{k_j}\}$ that strongly converges to $\ou$. Since $\{u^{k}\}$ is bounded, Theorem~\ref{theo:converge-stand-1} verifies the boundedness of $\{v^{k_j-1}\}$. Arguing as in the proof of Theorem~\ref{theo:converge-stand-1} leads us to the strong convergence 
\begin{equation*}
g(u^{k_j}) \to g(\overline{u}) \text{ as }j\to \infty.
\end{equation*}
Then the lower semicontinuity of $h$ ensures that
\begin{equation*}
\begin{split}
\limsup\limits_{j\rightarrow \infty} f(u^{k_j}) & =  \limsup\limits_{j\rightarrow \infty} \big[ g(u^{k_j}) - h(u^{k_j}) \big] \\
& = g(\ou)-  \liminf\limits_{j\rightarrow \infty} h(u^{k_j}) \\
& \le g(\ou) - h(\ou) = f(\ou),
\end{split}
\end{equation*}
and thus we have $f(u^{k_j})\to f(\ou)$ by the lower semicontinuity of $f$. The convergence of $\{f(u^k)\}$ yields $f(u^k)\to f(\ou)$, and therefore $f$ has the same value $\bar f$ on $\Omega$ as claimed in (i).

To verify (ii) under the additional assumptions imposed, first we show that $\Omega$ is contained in the set of $M$-stationary points \eqref{defi:M-crit} of $f$. Indeed, Theorem~\ref{theo:converge-stand-1} implies that any $\ou\in \Omega$ is a critical point of $f=g-h$, i.e., $\nabla h(\ou) \in\partial g(\ou)$. On the other hand, the subdifferential sum rule taken from \cite[Proposition~1.107(i)]{Mordukhovich06}
ensures the equalities
\begin{equation*}
\partial_F f(\ou) = \partial_F(g-h)(\ou) = \partial_F g(\ou) - \nabla h(\ou) = \partial g(\ou) - \nabla h(\ou),
\end{equation*}
which thus imply that $0 \in \partial_F f(\ou)\subset \partial_M f(\ou)$ confirming that $\ou$ is an $M$-stationary point of $f$. 

Employing further the Polyak-\L ojasiewicz property of $f$, we find for every $u' \in \Omega$ (which, as above, is also an $M$-stationary point of $f$) numbers $C',\varepsilon'>0$ and $\theta'\in [0,1)$ such that 
\begin{equation}\label{eq:L}
\left| f(u) - \bar{f} \right|^{\theta'} \le C' \|v\|\quad \text{ whenever } u\in B(u', \varepsilon')\text{ and } v \in \partial_M f(u).
\end{equation}
By the continuity of $f|_{\mathrm{dom} f}$ with respect to $\dom f$, we may shrink $\varepsilon'>0$ if needed so that 
$$
|f(u)-\bar f| = |f(u) - f(u')|<1\quad \text{for all }u\in \dom f \cap B(u',\varepsilon').
$$ 
The compactness of $\Omega$ allows us to pick finitely many points $u'_i\in \Omega$ $(i=1,\ldots,l)$, corresponding to $C'_i,\varepsilon'_i>0$ and $\theta'_i\in [0,1)$, such that $\Omega \subset \bigcup_{i=1}^l B(u'_i,\varepsilon'_i)$ and 
\begin{equation}
\begin{cases}
\left| f(u) - \bar f \right|^{\theta'_i} \le C'_i \|v\|\quad \text{ for all }\;u\in B(u'_i,\varepsilon'_i),\;v\in\partial_M f(u),\;\text{ and }\;i\in\{1,\ldots,l\},\\
|f(u)-\bar f| < 1\quad \text{for all }\, u\in \dom f\cap B(u'_i,\varepsilon'_i)\;\mbox{ and }\; i\in\{1,\ldots,l\}.
\end{cases}\label{eq:use-comp}
\end{equation}
Furthermore, it follows from the compactness of $\Omega$ that there exists $\varepsilon \in (0,1)$ ensuring that $\Omega + \varepsilon \B\subset \bigcup_{i=1}^l B(u'_i,\varepsilon'_i)$, which readily yields
\begin{equation}\label{eq:distin}
\big\{u\in \mathcal{H}\mid \mathrm{dist}(u;\Omega)\le\varepsilon\big\} \subset \Omega + \varepsilon \B\subset \bigcup_{i=1}^l B(u'_i,\varepsilon'_i).
\end{equation}
By letting $C:=\max_{i=1,\ldots,l}C'_i$ and $\theta:= \max_{i=1,\ldots,l}\theta'_i$, selecting $u$ with $\mathrm{dist}(u;\Omega)\le \varepsilon$ and $v\in \partial_M f(u)$, and then employing \eqref{eq:use-comp} and \eqref{eq:distin} lead us to 
\begin{equation*}
|f(u) - \bar f|^{\theta} \le |f(u)-\bar f|^{\theta'_i} \le C'_i \|v\| \le C\|v\|,
\end{equation*}
where $i\in \{1,\ldots,l\}$ is such that $u\in B(u'_i,\varepsilon'_i)$, and thus
assertion (ii) is justified. The verification of (iii) is similar, which completes the proof of the lemma. 
\end{proof}

The next lemma is a direct consequence of the fundamental Ekeland's variational principle (see, e.g., \cite[Theorem~2.26]{Mordukhovich06}) and the classical Moreau-Rockafellar's subdifferential sum rule of convex analysis in Hilbert spaces presented, e.g., in \cite[Theorem~3.48]{nam}.

\begin{Lemma}\label{lem:esub-appro}
Take $f\in \Gamma (\mathcal{H})$ and $\e>0$. For any point $u\in \dom f$ and $v\in \partial_{\e}f(u)$, there exists $w\in \B\big(u,\sqrt{\e}\big)$ such that $v\in \partial f (w) + \sqrt{\e}\B$. 
\end{Lemma}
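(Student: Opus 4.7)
The plan is to view $v \in \partial_\varepsilon f(u)$ as saying that $u$ is an $\varepsilon$-minimizer of a perturbed convex function and then apply Ekeland's variational principle to obtain a nearby exact minimizer of a further perturbation. Specifically, define $\varphi \colon \mathcal{H}\to \overline{\R}$ by $\varphi(u') := f(u') - \langle v, u'\rangle$. The defining inequality of $\partial_\varepsilon f(u)$ rewrites as
\begin{equation*}
\varphi(u) \le \varphi(u') + \varepsilon \quad \text{for all } u'\in \mathcal{H},
\end{equation*}
so $\varphi \in \Gamma(\mathcal{H})$, $\inf \varphi \ge \varphi(u) - \varepsilon > -\infty$, and $u$ is an $\varepsilon$-approximate global minimizer of $\varphi$.

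Next I would apply Ekeland's variational principle (in the form of \cite[Theorem~2.26]{Mordukhovich06}) to $\varphi$ at the point $u$ with parameters $\varepsilon$ and $\lambda := \sqrt{\varepsilon}$. This produces a point $w\in \mathcal{H}$ with $\|w - u\| \le \sqrt{\varepsilon}$, i.e., $w\in \B(u,\sqrt{\varepsilon})$, such that $w$ is the unique global minimizer of the perturbed function
\begin{equation*}
\psi(u') := \varphi(u') + \sqrt{\varepsilon}\,\|u' - w\|, \quad u'\in \mathcal{H}.
\end{equation*}

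Finally I would extract the subdifferential inclusion by Fermat's rule together with the Moreau-Rockafellar sum rule (e.g., \cite[Theorem~3.48]{nam}). The qualification condition for the sum rule is automatic here, since $u'\mapsto \sqrt{\varepsilon}\|u'-w\|$ is finite and continuous on all of $\mathcal{H}$. Using that $\partial \varphi(w) = \partial f(w) - v$ (the term $-\langle v,\cdot\rangle$ is linear with subdifferential $\{-v\}$) and that $\partial \|\cdot - w\|(w) = \B$ is the closed unit ball, Fermat's rule $0\in \partial \psi(w)$ yields
\begin{equation*}
0 \in \partial f(w) - v + \sqrt{\varepsilon}\,\B,
\end{equation*}
which rearranges to $v\in \partial f(w) + \sqrt{\varepsilon}\,\B$. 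Combined with $w\in \B(u,\sqrt{\varepsilon})$, this is exactly the claim.

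The only mildly delicate step is to make sure the Ekeland parameter $\lambda$ is chosen to produce the prescribed radius $\sqrt{\varepsilon}$ and perturbation coefficient $\sqrt{\varepsilon}$ simultaneously; this is what motivates the square-root scaling $\lambda = \sqrt{\varepsilon}$ (so that both $\varepsilon/\lambda$ and $\lambda$ equal $\sqrt{\varepsilon}$). Everything else is a routine consequence of convex analysis in Hilbert spaces.
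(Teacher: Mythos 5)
Your argument is correct and is exactly the route the paper indicates: the paper gives no detailed proof of this lemma, stating only that it is a direct consequence of Ekeland's variational principle and the Moreau--Rockafellar sum rule, which is precisely the reduction you carry out (with the right scaling $\lambda=\sqrt{\varepsilon}$ so that both the distance bound and the perturbation coefficient come out as $\sqrt{\varepsilon}$).
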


Now we formulate the major assumptions, which are used---alternatively---in the main convergence results for Algorithm~\ref{algo:i-adca} given below.\vspace*{0.05in}

{\bf(A1)}: $f|_{\mathrm{dom} f}$ is locally Lipschitz with respect to $\dom f$, and $f$ possesses the Polyak-\L ojasiewicz property.

{\bf(A2)}: $g$ is of class ${\cal C}^{1,1}$.

{\bf(A3)}: $h$ is of class ${\cal C}^{1,1}$.

{\bf(A4)}: Any subsequence of $\{u^k\}$ generated by Algorithm~\ref{algo:i-adca} has a strongly convergent subsequence, and the set $\Omega$ of strong cluster points of $\{u^k\}$ is compact.  

\begin{Theorem}\label{theo:adca-theo2} Let {\rm(A1)} and {\rm(A4)} be satisfied. In the setting of Lemma~{\rm\ref{lem:iadca-rate-lem2}} with $C,\ve>0$ and $\th\in [0,1)$, assume that either {\rm(A2)} holds for $g$, or {\rm(A3)} holds for $h$. Consider a variant of Algorithm~{\rm\ref{algo:i-adca}} in which we pick in Step~{\rm 2} a vector $v^k\in \partial_{\varepsilon_k^{1/\theta}}h(w^k)$ if {\rm(A2)} holds, and pick in Step~{\rm 3} an $\varepsilon_{k}^{1/\theta}$-solution $u^{k+1}$ if {\rm {\rm{}(A3)}} is satisfied. Then we have the following assertions:

{\bf (i)} When $\theta=0$, the sequence $\{f(u^k)\}$ converges to $\bar{f}$ in a finite number of steps.

{\bf(ii)} When $\theta \in (0,1/2]$, the sequence $\{f(u^k)\}$ converges linearly to $\bar f$.

{\bf(iii)} When $\theta \in (1/2,1)$, there exists $\eta>0$ such that 
\begin{equation*}
f(u^k) - \bar f\le \eta k^{\frac{1}{1-2\theta}} \ \text{ whenever } k\in \N \text{ is sufficiently large.}
\end{equation*}
\end{Theorem}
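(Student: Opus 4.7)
The plan is to derive a one-step recursion of the form $\delta_{k+1}^{2\theta}\lesssim \delta_k-\delta_{k+1}$, where $\delta_k:=f(u^k)-\bar f$, by combining the Polyak--\L ojasiewicz inequality from Lemma~\ref{lem:iadca-rate-lem2}(ii)--(iii) with the descent estimate of Lemma~\ref{lem:converge-stand-2}(i), and then to extract the three advertised rate regimes from this recursion by the classical arguments of Polyak and Attouch--Bolte. As preparation, I would note that (A4) together with Lemma~\ref{lem:iadca-rate-lem2}(i) gives $\delta_k\searrow 0$ and $\mathrm{dist}(u^k;\Omega)\to 0$ (the latter by a standard subsequence argument using compactness of $\Omega$), so that for all sufficiently large $k$ both $u^{k+1}$ and $w^k$ will lie in the PL neighborhood of $\Omega$ on which Lemma~\ref{lem:iadca-rate-lem2}(ii)/(iii) is valid.

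The crux is to produce an explicit ``small'' subgradient: in case (A3), an element $v\in\partial_M f(\tilde u^{k+1})$ at a point $\tilde u^{k+1}$ close to $u^{k+1}$ satisfying $\|v\|\le M\|u^{k+1}-w^k\|$; in case (A2), the symmetric statement for $\partial_M(-f)$ at a point close to $w^k$. In case (A3), where $h\in\mathcal{C}^{1,1}$ with constant $L_h$, the boosted Step~3 gives $v^k\in\partial_{\varepsilon_k^{1/\theta}}g(u^{k+1})$, so Lemma~\ref{lem:esub-appro} produces $\tilde u^{k+1}\in\B(u^{k+1},\varepsilon_k^{1/(2\theta)})$ with $v^k\in\partial g(\tilde u^{k+1})+\varepsilon_k^{1/(2\theta)}\B$, and similarly $v^k\in\partial_{\varepsilon_k}h(w^k)$ gives $\tilde w^k$ near $w^k$ with $v^k=\nabla h(\tilde w^k)+O(\sqrt{\varepsilon_k})$. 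The telescoping
$$
v^k-\nabla h(\tilde u^{k+1}) = \bigl[v^k-\nabla h(\tilde w^k)\bigr]+\bigl[\nabla h(\tilde w^k)-\nabla h(\tilde u^{k+1})\bigr],
$$
combined with $L_h$-Lipschitzness of $\nabla h$ and the adaptive bound $\varepsilon_k\le\tfrac{\sigma_g+\sigma_h}{32}\|u^{k+1}-w^k\|^2$ (which, since $\varepsilon_k\le 1$ and $\theta\le 1$, yields $\max\{\sqrt{\varepsilon_k},\varepsilon_k^{1/(2\theta)}\}\lesssim\|u^{k+1}-w^k\|$), should deliver the desired $v\in \partial g(\tilde u^{k+1})-\nabla h(\tilde u^{k+1})=\partial f(\tilde u^{k+1})$ with $\|v\|\le M\|u^{k+1}-w^k\|$. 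Applying Lemma~\ref{lem:iadca-rate-lem2}(ii) at $\tilde u^{k+1}$ and then using local Lipschitzness of $f|_{\dom f}$ from (A1) to pass from $\tilde u^{k+1}$ back to $u^{k+1}$ (an error of order $\varepsilon_k^{1/(2\theta)}\lesssim\|u^{k+1}-w^k\|$ that can be absorbed into the dominant term) yields $\delta_{k+1}^\theta\le M'\|u^{k+1}-w^k\|$. Case (A2) is handled by the mirror construction with $-f=h-g$ and Lemma~\ref{lem:iadca-rate-lem2}(iii), building the subgradient at a point near $w^k$.

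Squaring this estimate and combining with Lemma~\ref{lem:converge-stand-2}(i) then gives the key recursion
$$
\delta_{k+1}^{2\theta}\le M''\,(\delta_k-\delta_{k+1})\quad\text{for all sufficiently large }k,
$$
from which the three rates follow by standard arguments. For $\theta=0$, the convention $0^0=0$ forces $1\le M''(\delta_k-\delta_{k+1})$ whenever $\delta_k>0$, and the summability of $\delta_k-\delta_{k+1}$ yields termination in finitely many steps. For $\theta\in(0,1/2]$, since $\delta_{k+1}\le 1$ eventually and $2\theta\le 1$ implies $\delta_{k+1}\le \delta_{k+1}^{2\theta}$, one obtains the geometric contraction $\delta_{k+1}\le\tfrac{M''}{1+M''}\delta_k$. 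For $\theta\in(1/2,1)$, the classical ODE-comparison argument applied to $\delta_{k+1}^{2\theta}\le M''(\delta_k-\delta_{k+1})$ produces $\delta_k\le\eta\,k^{1/(1-2\theta)}$.

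The principal obstacle I anticipate is the careful bookkeeping in the subgradient construction above: one must correctly match the boosted precision exponent $1/\theta$ on the smooth side with the square-root perturbation $\varepsilon_k^{1/(2\theta)}$ produced by Ekeland's principle in Lemma~\ref{lem:esub-appro}, and verify that every perturbation term---including the local Lipschitz error incurred when transferring $f(\tilde u^{k+1})$ to $f(u^{k+1})$---is genuinely of order $\|u^{k+1}-w^k\|$, so that the clean recursion closes uniformly in both cases (A2) and (A3). A secondary delicate point will be confirming that $\tilde u^{k+1}$ (resp.\ its analogue in case (A2)) actually enters the PL neighborhood of $\Omega$ for all large $k$, which follows from $\mathrm{dist}(u^k;\Omega)\to 0$ together with $\|u^{k+1}-w^k\|\to 0$ but genuinely uses (A4) beyond what is recorded in Lemma~\ref{lem:iadca-rate-lem2}.
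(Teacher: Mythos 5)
Your proposal is correct and follows essentially the same route as the paper's proof: Ekeland-type perturbation via Lemma~\ref{lem:esub-appro} with the boosted precision $\varepsilon_k^{1/\theta}$ chosen precisely so that the Ekeland radius $\varepsilon_k^{1/(2\theta)}$ becomes $\varepsilon_k^{1/2}=O(\|u^{k+1}-w^k\|)$ after raising to the power $\theta$, assembly of a small limiting subgradient at the perturbed point using the ${\cal C}^{1,1}$ component, the uniformized Polyak--\L ojasiewicz bound of Lemma~\ref{lem:iadca-rate-lem2}(ii)/(iii), transfer back to $f(u^{k+1})$ by subadditivity of $t^\theta$ and local Lipschitzness, and combination with Lemma~\ref{lem:converge-stand-2}(i) and the adaptive rule \eqref{eq:adaptive-rule} to obtain $\rho_{k+1}^{2\theta}\le\widetilde C(\rho_k-\rho_{k+1})$, from which the three rates follow by the standard device. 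The only differences are cosmetic (you detail the (A3) case and mirror it for (A2), whereas the paper does the reverse), and the bookkeeping you flag as the ``principal obstacle'' is exactly what the paper's covering and estimation arguments carry out.
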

\begin{proof}
For each $k\in \N\cup\{0\}$, we have $\varepsilon_{k}^{1/\theta}\le \varepsilon_{k}$ implying that
$$
\begin{aligned}
\partial_{\varepsilon_k^{1/\theta}}h(w^k)&\subset \partial_{\varepsilon_k}h(w^k),\\
\varepsilon_k^{1/\theta}\text{-argmin}_{u\in \mathcal{H}}[g(u) - \la v^k,u\ra] &\subset \varepsilon_k\text{-argmin}_{u\in \mathcal{H}}[g(u) - \la v^k,u\ra].
\end{aligned}
$$
This tells us that the algorithm formulated in the theorem is a particular case of Algorithm~\ref{algo:i-adca}, and thus all the conclusions established above
apply to the generated sequences $\{u^k\},\{w^k\}$, and $\{v^k\}$. Since any subsequence of $\{u^k\}$ admits a strongly convergent subsequence by (A4), and since $\mathrm{dist}(\cdot;\Omega):\mathcal{H}\to \R$ is a continuous function, it is easy to see that 
$$
\mathrm{dist}(u^k;\Omega) \to 0 \,\text{ as }\, k\to \infty.
$$ 
Consequently, the convergence $\|u^{k+1}-w^k\|\to 0$ obtained in
Lemma \ref{lem:converge-stand-2}(ii) and the Lipschitz continuity of $\mathrm{dist}(\cdot;\Omega)$ yield the convergence 
\begin{equation}\label{eq:wk-to-Ome}
\mathrm{dist}(w^k;\Omega) \rightarrow 0\, \text{ as }\, k\to \infty.
\end{equation}
It follows from Theorem~\ref{theo:converge-stand-1} and the boundedness of $\{u^k\}$ that
\begin{equation}\label{eq:omega-domf}
\Omega \subset \dom \partial g \cap \dom \partial h\subset \dom g \cap \dom h = \dom f.
\end{equation}
The remainder of the proof is split into the following two cases.\vspace*{0.05in}

{\bf Case~1:} {\em Assumption {\rm(A2)} holds}. Fix $x\in \Omega$ and find by using \eqref{eq:omega-domf}, (A1), and (A2) positive numbers $\ell_x,r_x >0$ ensuring the estimates 
\begin{equation}\label{eq:adca-rate-1}
\begin{split}
\big|f(u)-f(v)\big| \le \ell_x \|u-v\|\quad &\text{ for all } u,v\in B(x,r_x)\cap \dom f,\\
\left\| \nabla g(u) - \nabla g(v) \right\| \le \ell_x \| u - v \|\quad &\text{ for all } u,v \in B(x,r_x).
\end{split}
\end{equation}
The compactness of $\Omega$ allows us to extract finitely many points $x_1,\ldots,x_m \in \Omega$ corresponding to the sequences of positive parameters $\{\ell_i\}_{i=1}^m$ and $\{r_i\}_{i=1}^m$ with $\Omega \subset \bigcup_{i=1}^m B\big(x_i, r_{i} /8 \big)$. Define 
\begin{equation}\label{eq:lr}
\ell:= \max\big\{1,\ell_{i} \mid i=1,\ldots,m\big\}\; \text{ and }\; r := \min\big\{ \e/2, r_{i} /8 \mid i=1,\ldots,m \big\},
\end{equation}
where $\e>0$ is taken from (A1) and Definition~\ref{defi:Loja}. It follows from \eqref{eq:wk-to-Ome} and Lemma~\ref{lem:converge-stand-2}(ii) that 
\begin{equation}\label{eq:choosek}
\max\big\{\mathrm{dist}(w^{k};\Omega),\;\|u^{k+1}-w^k\|,\;\e_k^{1/2}\big\} < r\quad \text{ for all } k\in \N \text{ sufficiently large},
\end{equation}
which yields in turn the inclusions
\begin{equation}\label{eq:wk-in-ball}
w^k \in r\B + \Omega \subset r\B + \bigcup_{i=1}^m B(x_i,r_i/8) \subset \bigcup_{i=1}^m B(x_i,r_i/4),
\end{equation}
where the last one is due to the choice of $r$ in \eqref{eq:lr}. As a consequence of \eqref{eq:choosek} and \eqref{eq:wk-in-ball}, for each large $k\in \N$ there are $x_{i(k)}\in \Omega$, $i(k)\in \{1,\ldots,m\}$, with 
$$
\{u^{k+1},w^k\}\subset B\big(x_{i(k)},3r
_{i(k)}/8\big).
$$ 
Employing Lemma~\ref{lem:esub-appro}, we deduce from $v^k \in \partial_{\e_k}g(u^{k+1})\cap \partial_{\e_k^{1/\theta}}h(w^k)$ the existence of vectors $x^{k+1}\in \B\big(u^{k+1},\e_k^{1/2}\big)$ and $y^k \in \B(w^k,\e^{1/2\theta}_k\big) \subset \B(w^k,\e_k^{1/2}\big)$ for which
\begin{equation}\label{eq:use-esub-appro}
\begin{array}{ll}
v^k &\in\big[\nabla g(x^{k+1})+\e_k^{1/2}\B\big]\cap\big[\partial h(y^k)+\e^{1/2\theta}_k\B\big]\\
&\subset\big[\nabla g(x^{k+1})+\e_k^{1/2}\B\big]\cap\big[ \partial h(y^k)+\e^{1/2}_k\B\big].
\end{array}
\end{equation}
By $\big\{u^{k+1},w^k\big\} \subset B\big(x_{i(k)},3r_{i(k)}/8\big)$, it follows from \eqref{eq:lr} and \eqref{eq:choosek} that
\begin{equation}\label{eq:ytox}
\begin{cases}
\big\|x^{k+1}-x_{i(k)}\big\|\le \|x^{k+1}-u^{k+1}\| + \big\|u^{k+1}-x_{i(k)}\big\| \le \e_k^{1/2} + 3r_{i(k)}/8 < r_{i(k)}/2, \\
\big\|y^k-x_{i(k)}\big\|\le \|y^k-w^k\| + \big\|w^k-x_{i(k)}\big\| \le \e_k^{1/2} + 3r_{i(k)}/8 < r_{i(k)}/2.
\end{cases}
\end{equation}
We also have the estimates
\begin{equation}\label{eq:xkyk}
\begin{split}
\big\|x^{k+1}-y^k\big\| &\le \big\|x^{k+1}-u^{k+1}\big\| + \big\|u^{k+1}-w^k\big\| + \big\|w^k - y^k\big\|\le 2\e_k^{1/2} + \big\|u^{k+1}-w^k\big\|.
\end{split}
\end{equation}
The inclusions in \eqref{eq:use-esub-appro} imply by the Lipschitzian conditions in \eqref{eq:adca-rate-1} and the relationships in \eqref{eq:ytox} and \eqref{eq:xkyk} that 
\begin{equation*}
v^k \in \left[\nabla g(y^k) + \left(\big(2\ell+1\big)\e_k^{1/2} + \ell\big\|u^{k+1}-w^k\big\|\right)\B \right] \cap \left[\partial h(y^k) + \e^{1/2}_k\B\right] \ \ \text{ for all large }\;k\in \N.
\end{equation*}
This gives us a subgradient $b^k\in \partial h(y^k)$ satisfying 
\begin{equation}\label{eq:getbk}
\begin{split}
\big\|b^k-v^k\big\| &\le \e_k^{1/2},\\
\big\|b^k-\nabla g(y^k)\big\| &\le \|b^k - v^k\| + \|v^k - \nabla g(y^k)\|\le 2\big(\ell+1\big)\e_k^{1/2} + \ell\big\|u^{k+1}-w^k\big\|,\\
b^k-\nabla g(y^k) &\in \partial_F  h(y^k)-\nabla g(y^k) =\partial_F(h-g)(y^k)=\partial_F(-f)(y^k) \subset \partial_M(-f)(y^k).
\end{split}
\end{equation}
Deduce further from \eqref{eq:lr}, \eqref{eq:choosek}, and the nonexpansive property of $\mathrm{dist}(\cdot;\Omega)$ that 
$$
\mathrm{dist}(y^k;\Omega)\le \mathrm{dist}(w^k;\Omega)+\e_k^{1/2} \le \e.
$$
Taking \eqref{eq:getbk} into account and applying the result of 
Lemma~\ref{lem:iadca-rate-lem2}(iii) leads us to the inequalities
\begin{equation}\label{eq:Fyk}
\big|f(y^{k})-\bar{f}\big|^{\theta}\le C \big\|b^k-\nabla g(y^k)\big\| \le C\left[2\big(\ell+1\big)\e_k^{1/2} + \ell\big\|u^{k+1}-w^k\big\|\right].
\end{equation}

In order to achieve the claimed convergence rates of the algorithm, we need to pass from the information of $f$ at $y^k$ to the iterate $u^{k+1}$ in \eqref{eq:Fyk}. To this end, combining \eqref{eq:adaptive-rule}, \eqref{eq:converge-stand-3}, \eqref{eq:adca-rate-1}, \eqref{eq:Fyk}, and the decrease of $\{f(u^k)\}$ to $\bar f$ from Lemma~\ref{lem:iadca-rate-lem2} tells us that
\begin{equation}\label{eq:beforerho}
\begin{split}
\left[f(u^{k+1})-\bar f\right]^{\theta} &= \left[f(u^{k+1})-f(w^k)+ f(w^k)-f(y^k) + f(y^{k})-\bar f\right]^{\theta} \\
&\le \left[0+ \big|f(w^k)-f(y^k)\big| + \big|f(y^{k})-\bar f\big|\right]^{\theta}\\
&\le \big|f(w^k)-f(y^k)\big|^{\theta} + \big|f(y^{k})-\bar f\big|^{\theta}\\
&\le \big(\ell\|w^k-y^k\|\big)^{\theta}+ C\left[2\big(\ell+1\big)\e_k^{1/2} + \ell\big\|u^{k+1}-w^k\big\|\right] \\
&\le \ell \e_k^{1/2}+  C\left[2\big(\ell+1\big)\e_k^{1/2} + \ell\big\|u^{k+1}-w^k\big\|\right] \\
&\le \widehat{C} \left[f(u^k)-f(u^{k+1})\right]^{1/2},
\end{split}
\end{equation}
where the third inequality is due to $0^{\theta}=0$ and the concavity of the function $t\mapsto t^{\theta}$ as $t\ge 0$. Observe that $\widehat{C}$ depends only on $\sigma_g, \sigma_h, \ell$, and $C$. Letting $\rho_k:=f(u^k)-\bar f$ as $k\in \N$, it follows from the relationships in \eqref{eq:beforerho} that
\begin{equation*}
\rho_{k+1}^{2\theta} \le \widehat{C}^2 \big(\rho_k - \rho_{k+1}\big)\;\text{ for all large } k\in \N.
\end{equation*}

{\bf Case~2:} {\em Assumption {\rm(A3)} holds}. In this setting, a similar line of reasoning as presented in Case~1 can be applied. However, 
the role of $y^k \in \B(w^k,\e^{1/2\theta}_k\big)$ in \eqref{eq:getbk} and \eqref{eq:Fyk} is now replaced by selecting $x^{k+1}\in\B\big(u^{k+1},\e_k^{1/2\theta}\big)$. 
In both cases, we eventually get the estimate 
\begin{equation}\label{eq:alge-seq}
\rho^{2\theta}_{k+1}\le \widetilde{C}(\rho_k - \rho_{k+1})\quad\text{ for all large }\;k\in \N
\end{equation}
with some $\widetilde{C}>0$. Applying finally the standard device (see, e.g., \cite[Lemma~4.8]{AFV18} and the references therein) allows us to deduce from \eqref{eq:alge-seq} the convergence rates claimed in assertions (i) and (ii) and thus to complete the proof of the theorem.
\end{proof} 

Observe that when $\mathcal{H}$ is a finite-dimensional Euclidean space, the requirement that
\begin{equation}\label{eq:inA4}
\text{``every subsequence of $\{u^k\}$ has a convergent subsequence''},
\end{equation}
imposed in (A4), is clearly equivalent to the boundedness of the iterates $\{u^k\}$. The latter assumption is frequently used in the studies examining convergence rates of DCA in finite dimensions; see \cite[Theorem~2]{ijcai} for the exact case and \cite[Theorem~5]{PHLH22} in the presence of some inexactness. Below we discuss the availability of \eqref{eq:inA4} in the general framework of infinite-dimensional Hilbert spaces.

\begin{Remark}\rm 
If there exists a Banach space $X$ with {\em compact embedding} $X\subset\subset \mathcal{H}$ (see, e.g., \cite[page~286,~Section~5.7]{Evans}) and $\{u^k\}$ is bounded in $X$, then \eqref{eq:inA4} is fulfilled. Indeed, any subsequence $\{u^{k_n}\}$ of $\{u^k\}$ is bounded in $X$. Then observe by the definition of compact embedding $X\subset \subset \mathcal{H}$ that $\{u^{k_n}\}$ has a subsequence that strongly converges to some limit in $\mathcal{H}$. The condition \eqref{eq:inA4} is thus verified.

Consequently, a natural way to verify \eqref{eq:inA4} is to establish an a priori bound for the iterates 
$\{u^k\}$ in a more regular space 
$X$ (for instance, 
the (Sobolev) $H^1$ or (bounded variation) BV-type space)  although the convergence analysis is formulated in the Hilbert space $\mathcal{H}$ (e.g., $L^2$), where the iterates are generated. In such situations, compactness results such as Rellich–Kondrachov \cite[Section~5.7]{Evans} ensure the compact embedding $X\subset\subset \mathcal{H}$ and hence yield \eqref{eq:inA4}  automatically. In particular, \eqref{eq:inA4} becomes a verifiable consequence of a standard {\em boundedness} estimate in $X$, rather than an abstract sequential {\em compactness} assumption on 
$\{u^k\}$ itself.
\end{Remark}

\begin{Example}\rm 
We aim to construct an example of a DC problem \eqref{eq:DC} on an {\em infinite-dimensional} Hilbert space $\mathcal{H}$, and a sequence of iterates $\{u^k\}$ generated from Algorithm~\ref{algo:i-adca} that satisfy all the assumptions of Theorem~\ref{theo:adca-theo2} including (H1)--(H2) and (A1)--(A4).

Let $\mathcal{H}:=L^{2}(0,1)$ be the Lebesgue space with the usual inner product
\[
\langle u,v\rangle := \int_{0}^{1} u(x)v(x)\,dx, 
\qquad \|u\|:=\sqrt{\langle u,u\rangle}.
\]
Define the function $e\in \mathcal{H}$ by
\[
e(x)\equiv 1 \quad \text{ for all }\quad x\in(0,1).
\]
Then $\|e\|^{2}=\int_{0}^{1}1\,dx=1$. Fix $\alpha\in(0,1)$ and define convex functions $g,h:\mathcal{H}\to\R$ by
\[
g(u):=\frac12\|u\|^{2}, 
\qquad
h(u):=\frac{\alpha}{2}\,\langle u,e\rangle^{2} \quad (u\in \mathcal{H}).
\]
In particular, $h$ is of class $\mathcal{C}^{1,1}$ on $\mathcal{H}$ and hence satisfies (H1).

The objective function
\[
f(u):=g(u)-h(u)
=\frac12\|u\|^{2}-\frac{\alpha}{2}\langle u,e\rangle^{2}\quad (u\in \mathcal{H})
\]
is of the DC type on $\mathcal{H}$. Observe that
\begin{equation}\label{eq:fu>0}
f(u) \ge \dfrac{1}{2}\|u\|^2 - \dfrac{\alpha}{2}\|u\|^2 = \left(\dfrac{1-\alpha}{2}\right)\|u\|^2\ge 0 = f(0)\;\mbox{ for all }\;u\in \mathcal{H},
\end{equation}
and therefore condition (H2) is satisfied.

Next we verify that $f$ possesses the Polyak-\L ojasiewicz property. Since
$$
\nabla f(u) = u - \alpha\, \la u,e\ra\, e,\quad \nabla f(0) =0\;\mbox{ as }\;u\in{\cal H},
$$
we readily have the estimate
$$
\begin{aligned}
\la \nabla f(u) -\nabla f(v),u-v\ra &= \big\la u-v -\alpha \,\la u-v,e\ra\, e\, , \, u-v\big\ra\\
&= \|u-v\|^2 -\alpha\la u-v,e\ra^2\\
&\ge (1-\alpha)\|u-v\|^2\;\mbox{ for all }\;u,v\in \mathcal{H}
\end{aligned}
$$ 
which comes from $|\la u-v,e\ra| \le \|u-v\|$. In particular, $\nabla f(u)=0$ yields $u=0$, so the $M$-stationary point is unique and equals $\overline{u}:=0$. Moreover, it follows from \cite[Theorems~5.40 and 6.10]{nam} that $f$ is strongly convex with modulus $1-\alpha$. Consequently, we arrive at the inequality 
\begin{equation}\label{eq:s-convex}
f(v)\ge f(u) + \la \nabla f(u),v-u\ra + \left(\dfrac{1-\alpha}{2}\right)\|v-u\|^2,\quad u,v\in\mathcal{H}.
\end{equation}
At $v=\overline{u}=0$, \eqref{eq:fu>0} and \eqref{eq:s-convex} 
imply that
\begin{equation*}
\begin{aligned}
0\le f(u)-f(\overline{u}) = f(u) - f(0) &\le \la \nabla f(u), u\ra -\left(\dfrac{1-\alpha}{2}
\right)\|u\|^2\\
&= -\left(\dfrac{1-\alpha}{2}\right)\left\|u-\dfrac{\nabla f(u)}{1-\alpha}\right\|^2 + \dfrac{\big\|\nabla f(u)\big\|^2}{2(1-\alpha)}\\
&\le \dfrac{\big\|\nabla f(u)\big\|^2}{2(1-\alpha)},
\end{aligned}
\end{equation*}
which brings us to the estimate
\begin{equation*}
|f(u)-f(\overline{u})|^{1/2} \le \dfrac{1}{\sqrt{2(1-\alpha)}}\|\nabla f(u)\|, \quad u\in \mathcal{H}.
\end{equation*}
Therefore, the Polyak-\L ojasiewicz property of $f$ (and thus assumption (A1)) holds with $C:=1/\sqrt{2(1-\alpha)}$, $\varepsilon>0$ arbitrary, and $\theta:=1/2$. Furthermore, we deduce from
\[
\nabla g(u)=u,
\qquad
\nabla h(u)=\alpha\,\langle u,e\rangle\,e
\]
that both $g$ and $h$ are $\mathcal{C}^{1,1}$ on $\mathcal{H}$ with Lipschitz moduli $1$ and $\alpha$, respectively. This tells us that the assumptions in (A2) and (A3) are fulfilled.

Finally, we specify a sequence $\{u^k\}$ generated by Algorithm~\ref{algo:i-adca} such that (A4) holds for it. Let us start with $u^0:= e$. At each iteration, choose in Step~1 $w^k:=u^k$. In Step~2, take
\[
v^k:=\nabla h(w^k)=\alpha\,\langle u^k,e\rangle\,e.
\]
In Step~3, compute $u^{k+1}$ as the (exact) minimizer of
\[
g(u)-\langle v^k,u\rangle
=\frac12\|u\|^{2}-\langle v^k,u\rangle = \dfrac{1}{2}\big\|u-v^k\big\|^2 - \dfrac{1}{2}\big\|v^k\big\|^2,
\]
which clearly ensures that
\[
u^{k+1}=v^k=\alpha\,\langle u^k,e\rangle\,e,\quad k\ge 0.
\]
Proceeding by induction leads us to
\[
u^k=\alpha^{k}\,\langle u^0,e\rangle\,e = \alpha^{k}e,
\quad\text{and hence}\quad
\|u^k\|=\alpha^{k}\to 0 \text{ as }k\to\infty.
\]
In particular, $\{u^k\}$ converges strongly to $0$ in $L^{2}(0,1)$, which verifies (A4).
\end{Example}

\section{Optimal Control of Elliptic Equations with \texorpdfstring{$L^{1-2}$}{L1-2} Sparsity-Enhanced Regularization via Inexact Adaptive DCA}\label{sec:appl-PDECO}

In this section, we use our main Algorithm~\ref{algo:i-adca} for infinite-dimensional DC optimization to design a constructive procedure for solving an optimal control problem governed by elliptic equations with sparsity regularization. The needed preliminaries on functional analysis, measure theory, and partial differential equations can be found in the standard references; see, e.g., \cite{Evans,Troltzsch}.

Consider a nonempty open bounded set $\Omega\subset \R^n$ $(n=2,3)$ whose boundary $\Gamma$ is Lipschitz continuous and whose Lebesgue measure is denoted by $|\Omega|$. This framework clearly covers the case when $\Gamma$ is {\em polygonal}, i.e., $\Gamma$ consists of a finite number of straight-line segments.

We are interested in the following {\em elliptic optimal control} problem
associated with an {\em $L^{1-2}$ regularization} and {\em hard control constraints} (see, e.g., \cite{Ulbrich09,Stadler09,Troltzsch}):
\begin{eqnarray}
\underset{y \in H_0^1(\Omega),\, u \in L^2(\Omega)}{\operatorname{minimize}} && \frac{1}{2} \|y-y_d\|_{L^2(\Omega)}^2+\frac{\alpha}{2} \|u-u_d\|^2_{L^2(\Omega)} +\beta \big( \|u\|_{L^1(\Omega)}-\|u\|_{L^2(\Omega)}\big) \notag \\
\text {subject to } && A y=\phi+u \quad \text { in } \Omega, \label{eq:mainprob}\\
&& \beta_1 \leq u \leq \beta_2 \quad \text { a.e. in } \Omega .\notag
\end{eqnarray}
When $\beta=0$, the model corresponds to the well-known \emph{optimal heat-source} problem (as in Tr\"oltzsch \cite[Section~1.2]{Troltzsch}) with $u(\cdot)$ modeling a heat source in $\Omega$ generated by, e.g., electromagnetic induction or microwaves. Stadler \cite{Stadler09} encourages sparsity by augmenting the objective with the nonsmooth penalty $\beta\|u\|_{L^1 (\Omega)}$. The $L^{1-2}$-regularized model, treated in \cite{ZSYD24} by using a completely different inexact DCA variant, is motivated by the computational studies in Yin et al. \cite{Lou}, which show that $\ell^{1-2}:=\|\cdot\|_{1}-\|\cdot\|_2$ persistently outperforms $\ell^1$ in highly coherent settings (including image denoising) while yielding stronger sparsity and better recovery. 

Note that the homogeneous Dirichlet boundary condition $y=0$ on $\Gamma$ is expressed by $y\in H_0^1 (\Omega)$. We suppose that $y_d \in L^2(\Omega)$, $u_d \in L^{\infty}(\Omega)$, $\phi \in L^3(\Omega)\subset H^{-1}(\Omega)$, and the pointwise control bounds $\beta_1<0,\beta_2>0$ are finite. The term with $\alpha>0$ representing the cost of control, produces a smoother solution to problem \eqref{eq:mainprob}. For this reason, $\alpha$ is called a {\em regularization parameter}. The {\em sparsity-enhanced parameter} $\beta>0$ leads to an optimal control with a sparse structure so that its support is small. In simple terms, as $\beta$ increases, the support of the optimal control becomes smaller. In the state equation, $A\in \mathscr{L}\big(H_0^1 (\Omega),H^{-1}(\Omega)\big)$ denotes the {\em elliptic} linear bounded operator from $H_0^1 (\Omega)$ to $H^{-1}(\Omega)$, the continuous dual of $H_0^1(\Omega)$ defined by 
\begin{equation}\label{eq:A}
Ay:= -\sum_{i,j=1}^n \big[a_{ij}(x)y_{x_i}\big]_{x_j} + a_0 (x) y\quad \text{ for all } y\in H_0^1 (\Omega),
\end{equation}
where $y_{x_i}$ denotes the $i^{\mathrm{th}}$-partial derivative of $y$, the coefficients $\{a_{ij}\}_{i,j=1}^n$ are symmetric and Lipschitz continuous on the closure of $\Omega$, and where $a_0\in L^{\infty}(\Omega)$ is such that $a_0\ge 0$ a.e. in $\Omega$. Furthermore, $A$ satisfies the {\em $H_0^1$-elliptic property} meaning that there exists $\theta>0$ such that
$$
\sum_{i,j=1}^n a_{ij}(x) \xi_i \xi_j \ge \theta \|\xi\|^2 \ \text{ for a.e. } x\in \Omega \text{ and for all } \xi \in \R^n.
$$

We first observe in the following remark that under our standing assumptions, the state equation in \eqref{eq:mainprob} possesses a unique solution $y=y(u)$. 

\begin{Remark}\rm 
\quad 
\begin{enumerate}
\item[\bf(i)] For every $u \in L^2 (\Omega)$, the elliptic state equation $Ay=\phi+u$ possesses a {\em unique solution} $y=y(u) \in H_0^1 (\Omega)$. This is a direct consequence of \cite[Lemmas~9.2~and~9.3]{Ulbrich11}. For a more comprehensive result applicable to broader settings of the state equation, see Casas et al. \cite[Theorem~1.1.2]{HandbookNA}.

\item[\bf(ii)] From (i), the relation $u\mapsto y(u)-y_d$ is an {\em affine mapping} from $L^2 (\Omega)$ to $H_0^1 (\Omega)$, i.e., on $L^2 (\Omega)$ we have $y(u)=L(u)+w_0$  for some linear mapping $L:L^2 (\Omega)\to H_0^1 (\Omega)$ and for some $w_0\in H_0^1 (\Omega)$. In particular, the function $u\mapsto \frac{1}{2} \|y(u)-y_d\|_{L^2(\Omega)}^2$ is {\em convex}.
\end{enumerate}
\end{Remark}

Now let us introduce the equivalent {\em reduced problem} associated with \eqref{eq:mainprob}:
\begin{equation}
\begin{split}
\underset{u \in L^2(\Omega)}{\operatorname{minimize}} \quad & \frac{1}{2} \big\|y(u)-y_d\big\|_{L^2(\Omega)}^2+\frac{\alpha}{2} \|u-u_d\|^2_{L^2(\Omega)} + \beta \big( \|u\|_{L^1(\Omega)}-\|u\|_{L^2(\Omega)}\big) \\
\text {subject to}\quad
& \beta_1 \leq u \leq \beta_2 \quad \text { a.e. in } \Omega.
\end{split}\label{reduced1}
\end{equation}
We set $U:=L^2(\Omega)$, where $L^2(\Omega)$ is identified with its dual by the Riesz representation theorem, and set $Y:=H_0^1(\Omega)$. Denote the set of {\em admissible controls} as
\begin{equation}\label{eq:admissible}
U_{{\rm ad}}:=\{u\in U \mid \beta_1 \le u \le \beta_2 \ \text { a.e. in }\ \Omega\}.
\end{equation}
Since the strong convergence in $L^2 (\Omega)$ yields almost everywhere convergence up to a subsequence, $U_{{\rm ad}}$ is a nonempty closed convex subset of $U$. Define the functions $g,h\in \Gamma (U)$ by
\begin{eqnarray}
&& g(u):=\frac{1}{2} \|y(u)-y_d\|_{U}^2+\frac{\alpha}{2} \|u-u_d\|^2_{U} + \beta \|u\|_{L^1(\Omega)} + \delta_{{U}_{{\rm ad}}}(u)\quad \text{ for all }\;u\in U,\label{eq:G}\\
&& h(u):=\beta \|u\|_{U} \quad \text{ for all }\;u\in U \label{eq:H}
\end{eqnarray}
via the {\em indicator function} $\delta_{{U}_{{\rm ad}}}$ of ${U}_{{\rm ad}}$ given by $\delta_{{U}_{{\rm ad}}}(u)=0$ when $u\in {U}_{{\rm ad}}$, and $\infty$ otherwise. Then we represent the reduced problem \eqref{reduced1} as a {\em DC optimization problem} of the form
\begin{equation}\label{eq:mainprob-DCA}
\underset{u\in U}{\operatorname{minimize}}\;\;\;  f(u):=g(u)-h(u),
\end{equation}
in which $g$ is strongly convex with $\sigma_g=\alpha$ while $\sigma_h=0$.

To be able to apply our inexact adaptive DC algorithm to problem \eqref{eq:mainprob-DCA} with the above data, observe first that the function $h(u)=\beta \|u\|_{U}$ is differentiable with the gradient $\nabla h(u)=
\beta u/\|u\|_{U}$ at each $u\neq 0$ and that $0$ is a subgradient of $h$ at $u=0$. Regarding Step~2 of Algorithm~\ref{algo:i-adca}, we choose in each iteration the vector
\begin{equation*}
v^k = \begin{cases}
0 &\text{if }w^k=0,\\
\beta\dfrac{w^k}{\|w^k\|_U} &\text{if }w^k\neq 0.
\end{cases}
\end{equation*}
This results in solving the subproblem
\begin{equation}\label{subproblem}
u^{k+1} \in
\begin{cases}
\e_k\text{-}\mathrm{argmin}_{u\in U} g(u) &\text{if }w^k=0,\\
\e_k\text{-}\mathrm{argmin}_{u\in U} \Bigg[g(u)-\Bigg\la \beta\dfrac{w^k}{\|w^k\|_U},u\Bigg\ra_{U}\Bigg] &\text{if }w^k \neq 0,
\end{cases}
\end{equation}
and leads us to the following elaboration of Algorithm~\ref{algo:i-adca} to
solve \eqref{eq:mainprob-DCA}.

\begin{algorithm}[H]
\caption{I-ADCA-PDECO: Inexact Adaptive DCA for PDE Control}
\label{algo:i-adca-pdeco}
\textbf{Initialization:} $u^0\in {U}_{{\rm ad}}$,  
reduction factor $\gamma\in (0,1),\;\varepsilon_0\in(0,1]$. \\
Set $k=0$.
\begin{algorithmic}[1]
\STATE Choose $w^k$ satisfying $f(w^k) \le f(u^k)$.
\STATE Find $u^{k+1} \in U$ such that
\[
u^{k+1} \in
\begin{cases}
\varepsilon_k\text{-}\mathrm{argmin}_{u\in U} g(u) & \text{if } w^k = 0,\\[0.5em]
\varepsilon_k\text{-}\mathrm{argmin}_{u\in U}
\Bigg[
g(u) - \Bigg\langle \beta \dfrac{w^k}{\|w^k\|_U}, u \Bigg\rangle_{U}
\Bigg] & \text{if } w^k \neq 0.
\end{cases}\]
\STATE {\bf While} 
\[
\varepsilon_k > \frac{\sigma_g + \sigma_h}{32}\big\|u^{k+1} - w^k\big\|^2,
\]
set $\varepsilon_k \gets \gamma\,\varepsilon_k$ and restart from Step~2 with the new $\varepsilon_k$.
\STATE Choose $\varepsilon_{k+1} \in (0,\varepsilon_k]$ and set $k \gets k + 1$.
\end{algorithmic}
\noindent \textbf{Until} Stopping criteria.
\end{algorithm}

It follows from Section~\ref{sec:i-ADCA} that Algorithm~\ref{algo:i-adca-pdeco} is {\em well-defined} since the assumptions in (H1) and (H2) are fulfilled as discussed below. 

\begin{Remark} $\,$\rm  \label{prop:inffinite}
\begin{enumerate}
\item[\bf(i)] The function $h:=\beta\|\cdot\|_U$ is clearly continuous and satisfies (H1).

\item[\bf(ii)] Assumption (H2) holds for the DC problem \eqref{eq:mainprob-DCA}. Indeed, for $u\in {U}_{{\rm ad}}$ we get 
$$
\|u\|_{U}\le |\Omega|^{1/2} \cdot \max \{|\beta_1|,|\beta_2|\}<\infty.
$$
Thus it easily follows from \eqref{eq:G} and \eqref{eq:H} that 
$$
\inf_{u\in U} f(u) = \inf_{u\in U}  \big[g(u)-h(u)\big] \ge \inf_{u\in U} \big[-\beta\|u\|_U\big] > -\infty.
$$
Therefore, the sequences $\{u^k\}$, $\{w^k\}$, and $\{v^k\}$ generated by Algorithm~\ref{algo:i-adca-pdeco} from any starting point $u^0\in {U}_{{\rm ad}}$, are well-defined.

\item[\bf(iii)] Subproblem \eqref{subproblem} admits a unique solution $u^{k+1}$ when $\e_k=0$. Indeed, since the objective functions in \eqref{subproblem} are convex, continuous, and bounded from below, the proof easily follows from that of either Ulbrich \cite[Lemma~9.4]{Ulbrich11}, or Casas et al. \cite[Theorem~1.2.6]{HandbookNA}. The strongly convex term $\dfrac{\alpha}{2}\|u-u_d\|^2_U$ in $g(\cdot)$ from \eqref{eq:G} clearly yields the solution uniqueness.
\end{enumerate}
\end{Remark}

\section{Finite Element Discretization and Numerical Implementation}\label{finite-elem}

For solving subproblem \eqref{subproblem} in Algorithm \ref{algo:i-adca-pdeco} numerically, we make use of the so-called ``first discretize then optimize" approach beginning with a suitable discretization of the problem based on a {\em finite element approximation}. The ideal scenario is described as follows.

\subsection{Finite Element Scheme}\label{finite-scheme}

Suppose that $\Omega \subset \R^2$ is an open bounded convex set whose boundary is {\em polygonal}. Following Glowinski \cite[Chapter~IV,~Section~2.5]{Glowinski}, we consider a family of regular triangulations $\{\mathcal{T}^h\}_{0<h\le 1}$ of $\Omega$ with the mesh size $h>0$ possessing the properties:
\begin{itemize}
\item Each triangulation 
$\mathcal{T}^h$ is defined by
$$
\mathcal{T}^h :=\left\{T_i^h \subset \R^2 \;\Big|\; T_i^h \text{ is a closed triangle, } i=1, \ldots, t^h\right\} \text{ for some }t^h \in \N.
$$

\item $\bigcup_{i=1}^{t^h} T_i^h = \overline{\Omega}$, the closure of $\O$.

\item For all $i \neq j$, we have that $\operatorname{int} T_i^h \cap \operatorname{int} T_j^h=\varnothing$ and that $T_i^h \cap T_j^h$ must either be a common edge, or a common vertex, or the empty set.

\item The mesh size $h$ stands for the length of the longest edge among all triangles within the triangulation $\mathcal{T}^h$, and $h_T$ denotes the length of the longest edge in a particular triangle $T\in \mathcal{T}^h$. For each $T\in \mathcal{T}^h$, let $R_T$ be the diameter of the largest ball contained in $T$. Moving on, suppose that there exist two
constants $\rho,R>0$ satisfying
\begin{equation*}
\dfrac{h_T}{R_T}\le R,\quad \dfrac{h}{h_T}\le \rho \quad \text{ for all }\; T\in \mathcal{T}^h,\; \text{ for all }\; 0<h\le 1.
\end{equation*}
\end{itemize}

Then we define the sets
$$
\begin{aligned}
Y^h &:=\left\{y^h \in C(\overline{\Omega})\;\Big|\; y^h|_T \text { is (affine) linear for all } T \in \mathcal{T}^h \text{ and } y^h|_{\Gamma}=0\right\}\subset Y, \text{ and}\\
U^h &:= \Big\{u^h\in L^{\infty}(\Omega)\;\Big|\; u^h|_{T} \text{ is constant for all }T\in \mathcal{T}^h\Big\} \subset U,
\end{aligned} 
$$
as finite-dimensional counterparts (cf. Ciarlet \cite{Ciarlet75}) to $Y=H_0^1 (\Omega)$ and $U=L^2 (\Omega)$, respectively.\vspace*{0.05in}

Let $\Sigma^h$ be the set of all vertices in the triangulation $\mathcal{T}^h$ while $\Sigma_0^h:=\{P_1^h,\ldots,P_{v^h}^h\}$ $(v^h\in \N)$ is the collection of all interior vertices. By \cite[Lemma~2.10]{Knabner}, for each inner vertex $P_i^h$ $(i=1,\ldots,v^h)$ there exists a unique function $\psi^h_i\in Y^h$ satisfying $\psi^h_i (P_i^h)=1$ and $\psi^h_i (Q)=0$ for all $Q\in \Sigma^h\setminus\{P^h_i\}$. Obviously, the collection $\{\psi^h_i\}_{i=1}^{v^h}$ forms a basis for $Y^h$ and each $y^h \in Y^h$ can be represented as the sum
\begin{equation*}
y^h = \sum_{i=1}^{v^h} y^h (P_i^h) \psi_i^h.
\end{equation*}
On the other hand, each element $u^h \in U^h$ admits the unique representation  
\begin{equation*}
u^h = \sum_{i=1}^{t^h} u^h_i \chi_{i}^h
\end{equation*}
via the characteristic function $\chi_i^h$ of the triangle $T_i^h\in \mathcal{T}^h$ and the constant $u_i^h:=u^h|_{T_i^h}$. In this case, $\{\chi_i^h\}_{i=1}^{t^h}$ is a basis for $U^h$ satisfying \cite[Assumption~4.2]{Wachsmuth09}. Then for each $u^h \in U^h$, we define in the spirit of \eqref{eq:mainprob}-\eqref{eq:A} the {\em associated discrete state} as the unique solution $y^h (u^h)\in Y^h$ to
\begin{equation}\label{eq:dis-state}
\sum_{i,j=1}^n \int_{\Omega}a_{ij}y^h_{x_i}\varphi^h_{x_j}\, dx + \int_{\Omega}a_0 y^h \varphi^h\, dx = \int_{\Omega} (\phi+u^h)\varphi^h\, dx\quad \text{ for all } \varphi^h \in Y^h,
\end{equation}
the existence of which is a direct application of Brouwer's fixed-point theorem (see \cite[page~135]{HandbookNA}). Further, the {\em discrete control-to-state mapping} $u^h \in U^h \mapsto y^h(u^h)\in Y^h$ is of class ${\cal C}^2$; see \cite[page~212]{Ulbrich11}. The discrete counterpart set ${U}_{{\rm ad}}^h$ to ${U}_{{\rm ad}}$ in \eqref{eq:admissible} is given as 
\begin{equation*}
{U}_{{\rm ad}}^h := \big\{u^h \in U^h \;\big|\; \beta_1 \le u^h_i \le \beta_2 \, \text{ for all }\, i=1,\ldots,t^h\big\}. 
\end{equation*}

Now we replace the exact version of the subproblem in \eqref{subproblem} associated with some $w^k\in U\setminus \{0\}$ by its finite-dimensional {\em discretized form}:
\begin{equation}\label{eq:discretized}
\begin{array}{ll}
\underset{u^h \in {U}_{{\rm ad}}^h}{\operatorname{minimize}} \quad&\disp\dfrac{1}{2}\int_{\Omega} \big[y^h(u^h)-y_d\big]^2\, dx
+\disp\dfrac{\alpha}{2}\int_{\Omega}\big(u^h-u_d\big)^2\,dx\\
&+\, \beta \disp\sum_{i=1}^{t^h}|u^h_i|\cdot |T_i^h| -\disp\dfrac{\beta}{\|w^k\|_{U}}\disp\int_{\Omega} w^k u^h\, dx.
\end{array}
\end{equation}
The {\em existence} and {\em uniqueness} of the optimal solution to the discrete problem \eqref{eq:discretized} are ensured by a slight modification of the proofs in \cite[Lemma~9.3]{Ulbrich11} and \cite[Theorem~1.4.7]{HandbookNA} by taking into account that the  objective function in \eqref{eq:discretized} is continuous and strongly convex and that the discrete admissible set ${U}^h_{\text{ad}}$ is closed and convex being a weakly sequentially closed subset of $U$. The following {\em first-order necessary optimality conditions} for \eqref{eq:discretized} can be deduced from \cite[Equations~(1.4.11)--(1.4.13), (1.4.15)--(1.4.17)]{HandbookNA}, where the formula for subdifferentials of the $L^1$-norm function is utilized \cite[page~137]{HandbookNA}. 

\begin{Theorem}\label{theo:1st-order}
Let $\overline{u}^h\in U^h$ be the optimal solution to problem \eqref{eq:discretized}. Then there exist $\overline{y}^h,\overline{z}^h\in Y^h$, and $\overline{\lambda}^h\in \partial \big(\|\cdot\|_{L^1 (\Omega)}\big)(\overline{u}^h)$ such that 
\begin{align}
&\sum_{i,j=1}^n \int_{\Omega}a_{ij}\overline{y}^h_{x_i}\varphi^h_{x_j} \,dx + \int_{\Omega}a_0 \overline{y}^h \varphi^h\, dx = \int_{\Omega}\big(\phi+\overline{u}^h\big)\varphi^h\, dx\quad \text{ for all } \varphi^h \in Y^h,   \label{eq:stateeq}\\
&\sum_{i,j=1}^n \int_{\Omega}a_{ij}\varphi^h_{x_i} \overline{z}^h_{x_j} \,dx + \int_{\Omega}a_0 \varphi^h \overline{z}^h \,  dx = \int_{\Omega}\big(\overline{y}^h-y_d\big)\varphi^h \, dx\quad \text{ for all } \varphi^h \in Y^h,   \label{eq:adjointstateeq}\\
&\overline{u}^h_l = \mathrm{proj}_{[\beta_1,\beta_2]}\left[-\dfrac{1}{\alpha}\left(q^h_l-\mathrm{proj}_{[\beta_1,\beta_2]}q^h_l\right)\right]\quad \text{ for all } l=1,\ldots,t^h, \label{eq:Lu=0}
\end{align}
where each $q^h_l$ is an average integral given by
\begin{equation}\label{eq:qih}
q^h_l := \dfrac{1}{|T_l^h|}\int_{T^h_l} \left(\overline{z}^h-\alpha u_d-\dfrac{\beta}{\|w^k\|_{U}}w^k\right)dx.
\end{equation}
\end{Theorem}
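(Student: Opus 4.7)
The plan is to derive the system \eqref{eq:stateeq}–\eqref{eq:Lu=0} as the first-order necessary (and, by strong convexity, sufficient) optimality conditions for the convex problem \eqref{eq:discretized}, following the standard discrete adjoint/Lagrangian strategy of \cite{HandbookNA,Ulbrich11} adapted to piecewise-constant controls.

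I would start by recognising that the objective of \eqref{eq:discretized} splits as a smooth strongly convex part (tracking, Tikhonov, and linear term) and the convex $L^1$ penalty, minimised over the closed convex set $U^h_{\mathrm{ad}}$. Since $u^h\mapsto y^h(u^h)$ is affine by the linearity of \eqref{eq:dis-state}, the tracking functional is convex in $u^h$. Applying the Moreau–Rockafellar sum rule together with the normal-cone characterisation of a minimiser on a convex set produces a subgradient $\bar\lambda^h\in\partial(\|\cdot\|_{L^1(\Omega)})(\bar u^h)$ and the variational inequality
\begin{equation*}
\int_{\Omega}\bigl[\nabla J^h_s(\bar u^h)+\beta\bar\lambda^h\bigr](u^h-\bar u^h)\,dx\ge 0 \quad\text{for all } u^h\in U^h_{\mathrm{ad}},
\end{equation*}
where $J^h_s$ denotes the smooth part of the cost. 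Because $\bar u^h\in U^h$ is piecewise constant on $\{T^h_l\}$, the multiplier $\bar\lambda^h$ may be chosen piecewise constant on the same triangulation.

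Next I would set $\bar y^h:=y^h(\bar u^h)$, which by \eqref{eq:dis-state} is exactly the state equation \eqref{eq:stateeq}, and introduce the discrete adjoint state $\bar z^h\in Y^h$ as the unique solution of \eqref{eq:adjointstateeq}, whose solvability follows from the same coercivity/Lax–Milgram argument that yielded $\bar y^h$. Linearising the affine map $u^h\mapsto y^h(u^h)$ in a direction $v^h\in U^h$, testing the resulting sensitivity equation with $\bar z^h$, and testing \eqref{eq:adjointstateeq} with the sensitivity $\delta y^h$, I would exploit the symmetry of the coefficients $a_{ij}$ to swap the two bilinear forms and obtain $\int_\Omega(\bar y^h-y_d)\,\delta y^h\,dx=\int_\Omega \bar z^h\,v^h\,dx$. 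This yields the compact representation
\begin{equation*}
\nabla J^h_s(\bar u^h)=\bar z^h+\alpha(\bar u^h-u_d)-\frac{\beta}{\|w^k\|_U}\,w^k.
\end{equation*}

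Substituting this identity into the variational inequality and testing with perturbations of $\bar u^h$ supported on a single triangle $T^h_l$, the piecewise-constant structure together with definition \eqref{eq:qih} reduces the whole system to the scalar condition
\begin{equation*}
\bigl(\alpha\bar u^h_l+\beta\bar\lambda^h_l+q^h_l\bigr)(u^h_l-\bar u^h_l)\ge 0 \quad\text{for all } u^h_l\in[\beta_1,\beta_2],
\end{equation*}
equivalently $\bar u^h_l=\mathrm{proj}_{[\beta_1,\beta_2]}\bigl(-\alpha^{-1}(q^h_l+\beta\bar\lambda^h_l)\bigr)$. The main obstacle will be eliminating $\bar\lambda^h_l$ to arrive at the nested-projection form \eqref{eq:Lu=0}: this requires a case split on $\mathrm{sgn}(\bar u^h_l)$—using $\bar\lambda^h_l=\mathrm{sgn}(\bar u^h_l)$ on $\{\bar u^h_l\neq 0\}$ and $\bar\lambda^h_l\in[-1,1]$ otherwise—combined with the strict inequalities $\beta_1<0<\beta_2$, and then a careful matching of the selections obtained in each regime against the inner projection of $q^h_l$ onto $[\beta_1,\beta_2]$. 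Everything else (the state equation, the adjoint equation, and the convex subdifferential computations) is routine.
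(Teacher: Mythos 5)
Your route---state equation, discrete adjoint, variational inequality with a subgradient of the $L^1$ norm, and per-triangle reduction via the piecewise-constant structure---is exactly the standard derivation that the paper itself never writes out: its entire ``proof'' of Theorem~\ref{theo:1st-order} is the one-line citation to \cite[Eqs.~(1.4.11)--(1.4.13), (1.4.15)--(1.4.17)]{HandbookNA}. Everything in your sketch up to the scalar condition $(\alpha\overline{u}^h_l+\beta\overline{\lambda}^h_l+q^h_l)(u^h_l-\overline{u}^h_l)\ge 0$ for $u^h_l\in[\beta_1,\beta_2]$ is sound, including the use of the symmetry of $(a_{ij})$ to obtain the adjoint identity, the reduced-gradient representation that produces the averages \eqref{eq:qih}, and the observation that $\overline{\lambda}^h$ may be taken piecewise constant.

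The genuine gap is the final step, which you only announce as ``careful matching'' and never execute---and as described it cannot be completed. Eliminating $\overline{\lambda}^h_l$ is a soft-thresholding computation: the scalar convex problem $\min_{u\in[\beta_1,\beta_2]}\big\{\tfrac{\alpha}{2}u^2+q^h_l u+\beta|u|\big\}$ has unconstrained minimizer $-\alpha^{-1}\big(q^h_l-\mathrm{proj}_{[-\beta,\beta]}q^h_l\big)$, and since the objective is convex in one variable, the constrained minimizer is the projection of this value onto $[\beta_1,\beta_2]$. The inner projection therefore lands on the interval $[-\beta,\beta]$, not on $[\beta_1,\beta_2]$ as in \eqref{eq:Lu=0}: taken literally, \eqref{eq:Lu=0} would force $\overline{u}^h_l=0$ whenever $q^h_l\in[\beta_1,\beta_2]$, which contradicts your own variational inequality as soon as $\beta<|q^h_l|$. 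So your plan to match the multiplier selections ``against the inner projection of $q^h_l$ onto $[\beta_1,\beta_2]$'' would stall; you should instead carry out the soft-thresholding explicitly, arrive at the $[-\beta,\beta]$ form (which is also what the complementarity system \eqref{eq:KKT} encodes through its $\pm\beta$ shifts), and note that the interval in the inner projection of \eqref{eq:Lu=0} as printed appears to be a typo.
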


Building on this, a concrete numerical implementation of the discretized problem \eqref{eq:discretized} associated with the distributed optimal control problem for elliptic equation is presented in Subsection \ref{subsec:nume} via solving the system of first-order optimality conditions 
\eqref{eq:stateeq}--\eqref{eq:Lu=0} by using the \emph{semismooth Newton method}; see, e.g., Hinterm\"uller et al. \cite{hinter02},  Stadler \cite{Stadler09}, and Ulbrich \cite{Ulbrich11}.\vspace*{0.03in}

The next step is to demonstrate the convergence of the discretized solutions $\overline{u}_h$ to the continuous one $\overline{u}$ as $h\to 0$ and
to establish \emph{error estimates/rates of convergence} for the difference between $\overline{u}_h$ and $\overline{u}$ by means of the finite element discretization under consideration.

\begin{Theorem}\label{theo:error-estimate}
Denote by $\overline{u}$ and $\overline{u}^h$, respectively, the optimal solution to the second exact subproblem in \eqref{subproblem} and to the discrete version \eqref{eq:discretized} with respect to a discretization parameter $h\in (0,1]$. Then there exists 
a constant $C>0$ independent of $h$ for which the \emph{error estimate} 
\begin{equation}\label{eq:error}
\big\|\overline{u}_h - \overline{u}\big\|_{L^{\infty}(\Omega)} \le Ch 
\end{equation}
holds for all $h$ sufficiently small. Consequently, $\big\|\overline{u}_h - \overline{u}\big\|_{L^2 (\Omega)}\le \widehat{C}h$ for some $\widehat{C}>0$.
\end{Theorem}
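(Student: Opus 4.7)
My plan is to compare the first-order optimality systems satisfied by the continuous minimizer $\overline{u}$ of the subproblem in \eqref{subproblem} (case $w^k\neq 0$) and by the discrete minimizer $\overline{u}^h$ of \eqref{eq:discretized}. Both are expected to admit pointwise projection formulas structurally identical to \eqref{eq:Lu=0} through their respective adjoint states $\overline{z}$ and $\overline{z}^h$. Once these two formulas are in hand, the $1$-Lipschitz property of $\mathrm{proj}_{[\beta_1,\beta_2]}$ will reduce the estimate of $\|\overline{u}-\overline{u}^h\|_{L^\infty(\Omega)}$ to controlling $\|q-q^h\|_{L^\infty(\Omega)}$, where $q^h$ is identified with the piecewise-constant function whose values $q^h_l$ are given by \eqref{eq:qih}.

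\textbf{Continuous optimality system.} First, I derive the continuous analogue of Theorem~\ref{theo:1st-order}. Because $g$ is strongly convex (with $\sigma_g=\alpha>0$) and $U_{\mathrm{ad}}$ is closed convex, the continuous subproblem admits a unique minimizer $\overline{u}\in U_{\mathrm{ad}}$. Introducing the associated state $\overline{y}:=y(\overline{u})\in Y$ and the adjoint state $\overline{z}\in Y$ defined variationally through $A^{*}\overline{z}=\overline{y}-y_d$ with homogeneous Dirichlet conditions, the variational inequality characterizing $\overline{u}$, combined with the classical expression of the subdifferential of $\|\cdot\|_{L^1(\Omega)}$ (cf.\ \cite[page~137]{HandbookNA}), yields the pointwise formula
\begin{equation*}
\overline{u}(x)=\mathrm{proj}_{[\beta_1,\beta_2]}\left[-\tfrac{1}{\alpha}\bigl(q(x)-\mathrm{proj}_{[\beta_1,\beta_2]}q(x)\bigr)\right]\;\text{ a.e.\ in }\Omega,
\end{equation*}
where $q:=\overline{z}-\alpha u_d-\tfrac{\beta}{\|w^k\|_U}w^k$. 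This is the exact continuous counterpart of \eqref{eq:Lu=0}--\eqref{eq:qih}.

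\textbf{Comparison of the two formulas.} Subtracting the continuous and discrete projection formulas and using the nonexpansiveness of $\mathrm{proj}_{[\beta_1,\beta_2]}$ twice, I obtain, for every cell $T_l^h\in\mathcal{T}^h$ and a.e.\ $x\in T_l^h$,
\begin{equation*}
|\overline{u}(x)-\overline{u}^h_l|\le \tfrac{2}{\alpha}\,|q(x)-q^h_l|.
\end{equation*}
Splitting $q(x)-q^h_l=(q(x)-\overline{q}_l)+(\overline{q}_l-q^h_l)$, where $\overline{q}_l$ denotes the $T_l^h$-cell average of $q$, the first summand is bounded by $\|\nabla q\|_{L^\infty(T_l^h)}\,h_T\le Ch$ provided $q\in W^{1,\infty}(\Omega)$, while the second is controlled by the cell-wise $L^\infty$ error of $\overline{z}^h$ versus $\overline{z}$ together with the averaging error of the smooth data $\alpha u_d+\tfrac{\beta}{\|w^k\|_U}w^k$; note that $w^k\in\mathrm{dom}\,f\subset U_{\mathrm{ad}}\subset L^\infty(\Omega)$, so the averaging error of this data part is $O(h)$.

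\textbf{Finite element error and the main obstacle.} The remaining ingredient is the $L^\infty$ finite element estimate $\|\overline{z}-\overline{z}^h\|_{L^\infty(\Omega)}\le Ch$. Since $\Omega$ is a convex polygonal domain and the coefficients of $A$ in \eqref{eq:A} are Lipschitz with uniformly elliptic $(a_{ij})$, the primal source $\phi+\overline{u}\in L^\infty(\Omega)$ yields $\overline{y}\in H^2(\Omega)\cap C(\overline{\Omega})$; hence the adjoint source $\overline{y}-y_d\in L^2(\Omega)$ and $\overline{z}\in H^2(\Omega)\cap C(\overline{\Omega})$. Classical pointwise estimates for conforming $P_1$ finite elements on convex polygons then furnish the needed bound (up to a harmless $|\log h|$ absorbed in $C$) and, in parallel, the $W^{1,\infty}$ regularity of $q$ required in the previous step. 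Combining all bounds gives \eqref{eq:error}, and the $L^2$ consequence follows from $\|\cdot\|_{L^2(\Omega)}\le |\Omega|^{1/2}\|\cdot\|_{L^\infty(\Omega)}$. The main obstacle is precisely the verification of the $L^\infty$ finite element error estimate under the present mixed regularity (piecewise-constant control sources, polygonal boundary, general second-order elliptic operator), and in particular the derivation of the $W^{1,\infty}$ regularity of $\overline{z}$ needed to control the cell-averaging term; this is handled by carefully invoking regularity and pointwise error estimate results for elliptic boundary value problems on convex polygonal domains.
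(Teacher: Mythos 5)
Your proposal reconstructs from scratch the projection-formula comparison argument, whereas the paper's proof is essentially a two-line citation: it observes that the strongly convex term $\frac{\alpha}{2}\|u-u_d\|_U^2$ in $g$ makes the second-order sufficient condition of \cite[Theorem~1.3.3]{HandbookNA} hold with $\delta=\alpha$, and then invokes the ready-made error estimate \cite[Theorem~1.4.9]{HandbookNA} for piecewise-constant control discretizations. Your route is the standard machinery underlying that cited theorem, so in principle it is a legitimate (and more informative) alternative; however, as written it has a concrete gap at exactly the step you gloss over.

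The gap is in the treatment of the cell-oscillation term $|q(x)-\overline{q}_l|$ with $q=\overline{z}-\alpha u_d-\frac{\beta}{\|w^k\|_U}w^k$. Your bound $|q(x)-\overline{q}_l|\le \|\nabla q\|_{L^\infty(T_l^h)}h_T$ requires $q\in W^{1,\infty}(\Omega)$, hence Lipschitz regularity of $u_d$ and of $w^k$. But the paper only assumes $u_d\in L^{\infty}(\Omega)$, and $w^k$ is merely an algorithm iterate in $U_{\rm ad}\subset L^\infty(\Omega)$ (in practice itself a piecewise-constant function from an earlier discretization). Your stated justification --- ``$w^k\in U_{\rm ad}\subset L^\infty(\Omega)$, so the averaging error of this data part is $O(h)$'' --- is false: for a merely bounded measurable function the deviation from its cell average does not decay as $h\to 0$ (take $w^k$ oscillating between $\beta_1$ and $\beta_2$ at a scale finer than $h$). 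Without additional regularity hypotheses on $u_d$ and $w^k$, the first summand in your splitting is only $O(1)$, and the claimed $O(h)$ rate does not follow. Separately, the $L^\infty$ finite element estimate for $\overline{z}-\overline{z}^h$ and the $W^{1,\infty}$ regularity of $\overline{z}$ are asserted rather than proved; you honestly flag this as ``the main obstacle,'' but that means the central analytic ingredient is still missing, and the $|\log h|$ factor can only be absorbed into $C$ if the pointwise estimate is of order strictly higher than $h$ (e.g.\ $h^2|\log h|$), which you do not verify. To repair the argument you would either need to impose Lipschitz regularity on $u_d$ and $w^k$ (or restrict $w^k$ to the smooth quantities actually produced by the adjoint structure) or fall back on the paper's citation of \cite[Theorem~1.4.9]{HandbookNA} after checking its hypotheses.
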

\begin{proof}
By \cite[Theorem~1.1.2 and Corollary~1.1.4]{HandbookNA} and the construction of \eqref{subproblem}, the second-order sufficient condition \cite[Theorem~1.3.3]{HandbookNA} is justified with $\delta:=\alpha$. Thus the result of \cite[Theorem~1.4.9]{HandbookNA} gives us a constant $C>0$ ensuring the claimed error estimate \eqref{eq:error}.
\end{proof}

\subsection{Numerical Experiments}\label{subsec:nume}

This part demonstrates numerical performance of the proposed I-ADCA-PDECO algorithm for sparse PDE-constrained optimization problem. All of our results are implemented in MATLAB R2024b running on a PC with Ryzen 5900X CPU and 32GB of memory. Our codes are built on top of the PDE-constrained optimization framework developed by Antil \cite{Antil}.

We begin by introducing the algorithmic configuration settings, which are applied to all the examples below. The discretization was implemented by using the standard piecewise linear finite element approach, i.e., 5-point stencil. In all the numerical experiments, we evaluate the accuracy of an approximate optimal solution via the corresponding residual error by choosing zeros as the initial point. The stopping criterion for Algorithm~\ref{algo:i-adca-pdeco} is given by 
\[
\frac{\|u^{k+1} - u^{k}\|_{L^2(\Omega)}}{\max\{\|u^k\|_{L^2(\Omega)}\,,1\}} \le \epsilon.
\]
We terminate the algorithm when either $\epsilon < 10^{-12}$, or the maximum number of iterations equal to 20 is reached. Consider the problem: 
\begin{eqnarray}
\underset{y \in H_0^1(\Omega),\, u \in L^2(\Omega)}{\operatorname{minimize}} && \frac{1}{2} \|y-y_d\|_{L^2(\Omega)}^2+\frac{\alpha}{2} \|u-u_d\|^2_{L^2(\Omega)} + \beta \big( \|u\|_{L^1(\Omega)}-\|u\|_{L^2(\Omega)}\big) \notag \\
\text {subject to } && A y=\phi+u \quad \text { in } \Omega, \label{eq:ex-1} \\
&& \beta_1 \leq u \leq \beta_2 \quad \text { a.e. in } \Omega \notag
\end{eqnarray}
in which $\Omega:=[0,1]\times [0,1]$ and $A:=-\Delta$ is the {\em Laplacian operator}, i.e., in \eqref{eq:A} we let $a_0\equiv 0$ and $a_{ij}\equiv \delta_{ij}$, the {\em Kronecker delta}. The discretized \emph{necessary optimality conditions} \eqref{eq:stateeq}--\eqref{eq:Lu=0} in case \eqref{eq:ex-1} ensure the existence of $\big(\overline{y}^h,\overline{z}^h,\overline{\lambda}^h\big)\in Y^h\times Y^h \times U^h$ satisfying
\begin{equation}\label{eq:1st-pdeco}
\begin{cases}
 \displaystyle\sum_{i=1}^2 \int_{\Omega}\overline{y}^h_{x_i} \varphi^h_{x_i}\,dx = \int_{\Omega} \big(\phi+\overline{u}^h\big) \varphi^h \,dx\quad \text{ for all } \varphi^h \in Y^h,\\
 \displaystyle\sum_{i=1}^2 \int_{\Omega} \varphi^h_{x_i}\overline{z}^h_{x_i} \,dx = \int_{\Omega} \big(\overline{y}^h - y_d\big)\varphi^h \,dx\quad \text{ for all } \varphi^h \in Y^h,\\
 \overline{u}^h_l = \mathrm{proj}_{[\beta_1,\beta_2]}\left[-\dfrac{1}{\alpha}\left(q^h_l-\mathrm{proj}_{[\beta_1,\beta_2]}q^h_l\right)\right]\quad \text{ for all }\;l=1,\ldots,t^h, 
\end{cases}
\end{equation}
where the sequence of average integrals $\{q^h_l\}_{l=1}^{t^h}$ is given by \eqref{eq:qih}. In what follows, the matrix $\overline{A}$ represents a discretization of the Laplacian operator (the \emph{stiffness matrix}), $M$ is the FEM Gram matrix, i.e., the \emph{mass matrix}, and the matrix $\overline{M}$ represents the discretization of the control term within the PDE constraint. Using these matrices together with $d^h:=\dfrac{w^h}{\|w^h\|_U}$,  the relationships in \eqref{eq:1st-pdeco} induce the system of equations $F(\overline{y}^h,\overline{z}^h,\overline{u}^h,\overline{\mu}^h) = 0$  defined by 
\begin{equation}\label{eq:KKT}
\begin{cases}
\overline{A}\overline{y}^h - \overline{M} \overline{u}^h - \phi^h = 0,\\
M\overline{y}^h +\overline{A}^T \overline{z}^h - My_d^h = 0,\\
\alpha M\overline{u}^h - \overline{M}^{T}\overline{z}^h+M\overline{\mu}^h -\beta d^h = 0,\\
M\Big(\overline{u}^h - \max\big\{0,\overline{u}^h +\alpha^{-1}(\overline{\mu}^h - \beta)\big\} - \min\big\{0,\overline{u}^h +\alpha^{-1}(\overline{\mu}^h + \beta)\big\}\\
\quad + \max\big\{0,(\overline{u}^h-\beta_2)+\alpha^{-1}(\overline{\mu}^h-\beta)\big\} + \min\big\{0,(\overline{u}^h-\beta_1)+\alpha^{-1}(\overline{\mu}^h+\beta)\big\}\Big) = 0.
\end{cases}
\end{equation}
The latter system of equations can be solved efficiently by using the {\em primal-dual active set method} developed in \cite[Algorithm~2]{Stadler09}. At each inner iteration, the violation of the discrete KKT system is measured by the residual vectors
$r_{\mathrm{state}}, r_{\mathrm{adj}}, r_{\mathrm{stat}}$, and $r_{\mathrm{comp}}$ associated with the left-hand sides of the four equations in \eqref{eq:KKT}. Consequently, we define the residual
\[
\eta := \max\bigl\{
\|r_{\mathrm{state}}\|_{L^2(\Omega)},\,
\|r_{\mathrm{adj}}\|_{L^2(\Omega)},\,
\|r_{\mathrm{stat}}\|_{L^2(\Omega)},\,
\|r_{\mathrm{comp}}\|_{L^2(\Omega)}
\bigr\}
\]
and terminate the solution of the subproblem when $\eta$ is smaller than a prescribed tolerance.\vspace*{0.05in}

Next we focus on the following test problems in \cite{Stadler09}. 

\begin{Example}\rm \label{ex:pde1}
The data for this example are as follows: the desired state and source term are
$$
y_d:=\sin (2\pi x)\sin (2\pi y) \exp (2x)/6 \quad \text{and} \quad \phi:=0,
$$
the regularization parameter is $\alpha:=10^{-4}$, and the bounds of admissible controls are $\beta_1:=-20$ and $\beta_2:=20$. We study the convergence rate of Algorithm~\ref{algo:i-adca-pdeco} when $\beta = 0.2$ and the effect that the parameter $\beta$ has on the control solution by setting it to $\beta = 0.5,0.2,0.1,0$ of $\beta_c$, a parameter that is specified later. The results are shown in Figure~\ref{fig:pde12} and Figure \ref{fig:pde13}.
\end{Example}

\begin{Example}\rm \label{ex:pde2}
In this example, the desired state and source term are
\[
y_d := \sin (4\pi x)\cos(8\pi x)\exp(2x)\quad \text{and} \quad \phi := 10\cos(8\pi x)\cos(8\pi y),
\]
respectively. We let $\alpha := 0.0001$, $\beta := 0.01 \beta_c$ and set the pointwise control bounds as $\beta_1 := -40$ and $\beta_2 := 40$.
The approximate control $u^h$ and the approximate state $y^h$ output from Algorithm~\ref{algo:i-adca-pdeco} are shown in Figure~\ref{fig:pde2}.
\end{Example}

Table~\ref{tab:example1} and Table~\ref{tab:example2} report the error between the approximate state and the discretized optimal state given by $E_2:=\|y^h-\overline{y}^h\|_{L^2(\Omega)}$. Moreover, we compare the convergence profile of I-DCA-PDECO (obtained from Algorithm~\ref{algo:i-adca-pdeco} in which $w^k:=u^k$ and Step~3 is omitted) and I-ADCA-PDECO (Algorithm~\ref{algo:i-adca-pdeco}) for Examples~\ref{ex:pde1} and \ref{ex:pde2}, respectively. The mesh here is refined from
\(h = 1/2^4\) to \(h = 1/2^7\), where we use the same parameter $\alpha = 0.01$ for Example~\ref{ex:pde1} and $\alpha = 0.0001$ for Example~\ref{ex:pde2} in all the settings. In both tables, the parameter $\beta$ was chosen as $\beta= 0.1 \beta_c$, where $\beta_c := \|\overline{A}^{-T}(M^{T}(y^{h}_d - \overline{A}^{-1}\phi^h))\|_{L^\infty(\Omega)}$.
\begin{table}[ht]
\centering
  
\resizebox{\textwidth}{!}{
\begin{tabular}{
l  
l   
l  
l   
l   
l  
l   
l    
}
\toprule
{$h$} & {\#dofs} & {$E_2$} & {Residual $\eta$}
& Metric & {DCA} & {I-ADCA}\\
\midrule
\multirow[t]{2}{*}{$1/2^{4}$}
& 225   & 2.45e-1 & 1.40e-04 & CPU time (s) & 2.5     & 1.453 \\[1pt]
&&  &   & \#iter& 5 & 9 \\
\addlinespace[2pt]

\multirow[t]{2}{*}{$1/2^{5}$}
& 961   & 2.48e-1 & 2.92e-05 & CPU time (s) & 9.922  & 5.234  \\[1pt]
&&  &   & \#iter& 5      & 7  \\
\addlinespace[2pt]

\multirow[t]{2}{*}{$1/2^{6}$}
& 3969  & 2.49e-1 & 4.75e-06 & CPU time (s) & 58.703 & 30.766 \\[1pt]
&&  &   & \#iter& 4 & 7 \\
\addlinespace[2pt]

\multirow[t]{2}{*}{$1/2^{7}$}
& 16129 & 2.49e-1 & 9.14e-07 & CPU time (s) & 305.891 & 168.844 \\[1pt]
&&  &   & \#iter& 4      & 7 \\
\bottomrule
\end{tabular}
}
\vspace{3pt}
\caption{Convergence profile comparison of I-DCA-PDECO and I-ADCA-PDECO for Example \ref{ex:pde1}}
\label{tab:example1}
\end{table}

\begin{table}[h]
\centering
\resizebox{\textwidth}{!}{
\begin{tabular}{
l   
l   
l   
l   
l   
l   
l   
l   
}
\toprule
{$h$} & {\#dofs} & {$E_2$} & {Residual $\eta$}
& Metric & {DCA} & {I-ADCA} \\
\midrule
\multirow[t]{2}{*}{$1/2^{4}$}
& 225   & 1.41e+00 & 5.92e-05 & CPU time (s) & 1.8     & 1.047\\[1pt]
&&  &   & \#iter& 4& 7 \\
\addlinespace[2pt]

\multirow[t]{2}{*}{$1/2^{5}$}
& 961   & 1.69e+00 & 3.39e-05 & CPU time (s) & 9.281  & 2.906 \\[1pt]
&&  &   & \#iter& 4& 5 \\
\addlinespace[2pt]

\multirow[t]{2}{*}{$1/2^{6}$}
& 3969  & 1.77e+00 & 6.50e-06 & CPU time (s) & 106.797 & 51.859 \\[1pt]
&&  &   & \#iter& 4& 7 \\
\addlinespace[2pt]

\multirow[t]{2}{*}{$1/2^{7}$}
& 16129 & 1.79e+00 & 1.29e-06 & CPU time (s) & 347.672 & 155.922 \\[1pt]
&&  &   & \#iter& 4 & 6 \\
\bottomrule
\end{tabular}%
}
\vspace{3pt}
\caption{Convergence profile comparison of I-DCA-PDECO and I-ADCA-PDECO for Example \ref{ex:pde2}}
\label{tab:example2}
\end{table}

Regarding the performance of all the methods in
both examples, the \(L^2\)-error \(E_2\) stays essentially constant around
\(2.5\times 10^{-1}\) in Table~\ref{tab:example1} and between \(1.4\) and \(1.8\)
in Table~\ref{tab:example2}, while the residual \(\eta\), the maximum of the norms of the four left-hand sides in the KKT optimality system \eqref{eq:KKT}, decreases by
approximately one to two orders of magnitude as the discretization is refined, reflecting the increased accuracy of the discrete solutions. At the same time,
the number of outer DCA iterations remains very small and almost unaffected by the number of degrees of freedom (between 4 and 9 in Table~\ref{tab:example1} and between 4 and 7 in Table~\ref{tab:example2}), indicating a mesh-independent iteration complexity. In terms of efficiency, I-ADCA-PDECO consistently reduces CPU time by roughly a
factor of two compared with the basic I-DCA-PDECO in both examples.

\begin{figure}[ht]
\centering
\begin{subfigure}[t]{0.4\textwidth}
\centering
\includegraphics[trim={0 0 0 0.7cm},clip,width=\linewidth]{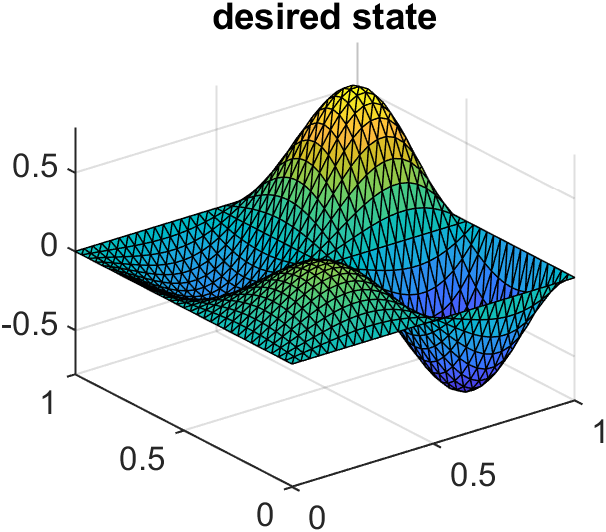}
\caption{Desired state}
\end{subfigure}
\begin{subfigure}[t]{0.4\textwidth}
\centering
\includegraphics[trim={0 0 0 0.7cm},clip,width=\linewidth]{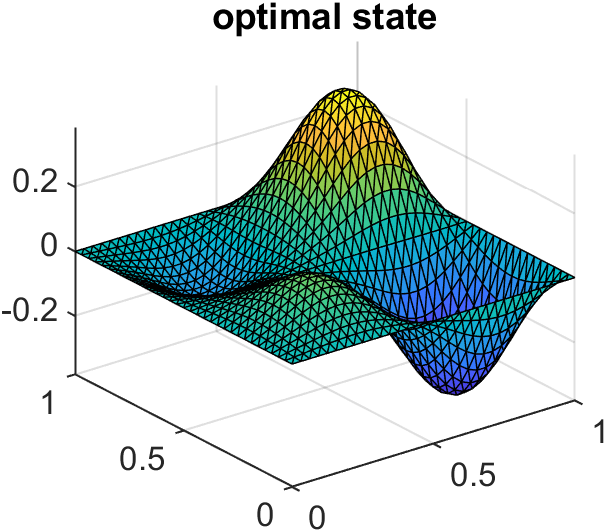}
\caption{Optimal state}
\end{subfigure}
\label{fig:pde11}
\caption{Desired state and optimal state output of I-ADCA-PDECO for Example \ref{ex:pde1}}
\end{figure}

\begin{figure}[ht]
\centering
\begin{subfigure}[t]{0.4\textwidth}
\centering
\includegraphics[trim={0 0 0 0.7cm},clip,width=\linewidth]{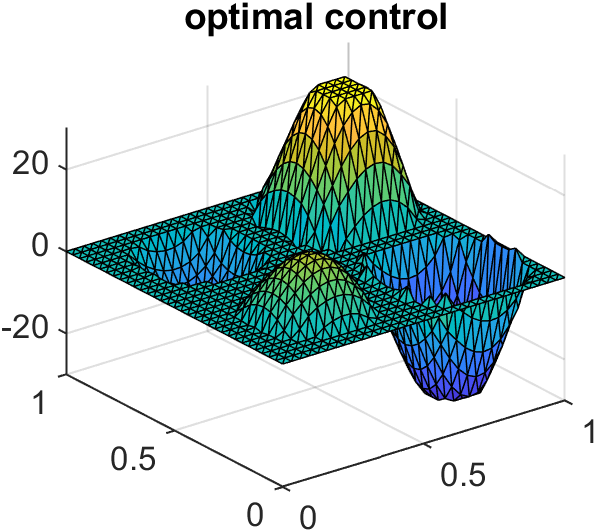}
\caption{Optimal control}
\end{subfigure}
\begin{subfigure}[t]{0.4\textwidth}
\centering
\includegraphics[trim={0 0 0 0cm},clip,width=\linewidth]{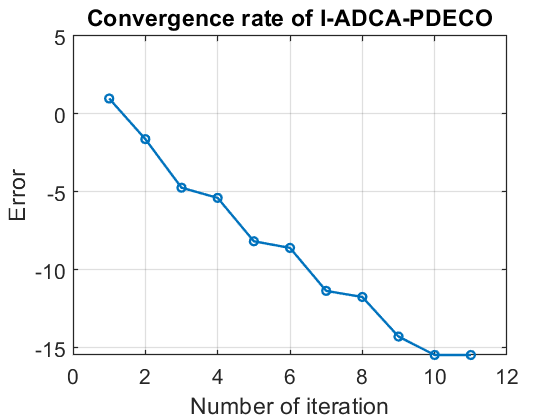}
\caption{Convergence rate of I-ADCA-PDECO}
\end{subfigure}
\caption{Optimal control output and convergence rate of I-ADCA-PDECO for Example \ref{ex:pde1}}
\label{fig:pde12}
\end{figure}

In Figure~\ref{fig:pde13}, we illustrate the influence of the sparsity–enhancing parameter $\beta$ on the structure of the optimal control in Example~\ref{ex:pde1}. As predicted by the $L^{1-2}$ regularization term, larger values of $\beta$ amplify the sparsity–inducing effect, thereby reducing the support of the control. Specifically, for higher choices of 
$\beta$, the control is activated only on small, localized regions of the domain, while being driven to zero elsewhere. In contrast, when $\beta$ is chosen smaller, the sparsity–inducing effect diminishes and the optimal control exhibits a broader spatial distribution, with nonzero values spread over a larger portion of the domain. The progression across the subfigures in 
Figure~\ref{fig:pde13} clearly demonstrates this trend, with the control profile evolving from highly concentrated and sparse patterns to more diffuse and dense structures as $\beta$ decreases.

\begin{figure}[ht]
\centering
\begin{subfigure}[t]{0.4\linewidth}
\centering
\includegraphics[trim={0 0 0 0.7cm},clip,width=\linewidth]{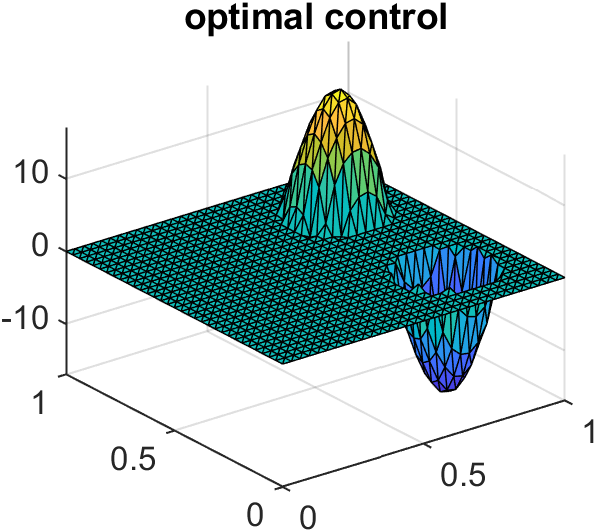}
\caption{$\beta = 0.5 \beta_c$}
\end{subfigure} 
\begin{subfigure}[t]{0.4\linewidth}
\centering
\includegraphics[trim={0 0 0 0.7cm},clip,width=\linewidth]{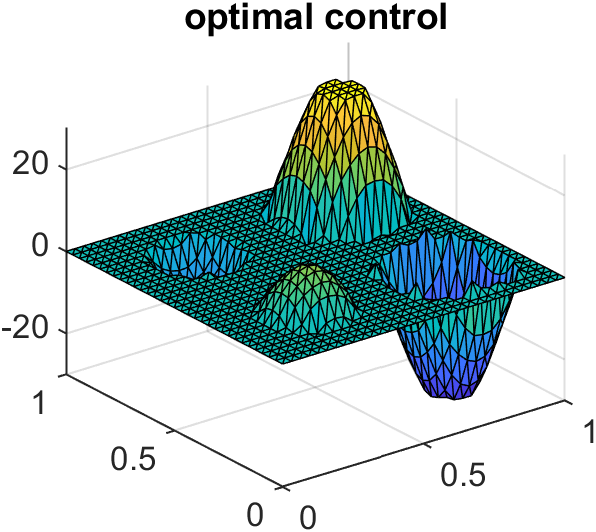}
\caption{$\beta = 0.2 \beta_c$}
\end{subfigure}

\centering
\begin{subfigure}[t]{0.4\linewidth}
\centering
\includegraphics[trim={0 0 0 0.7cm},clip,width=\linewidth]{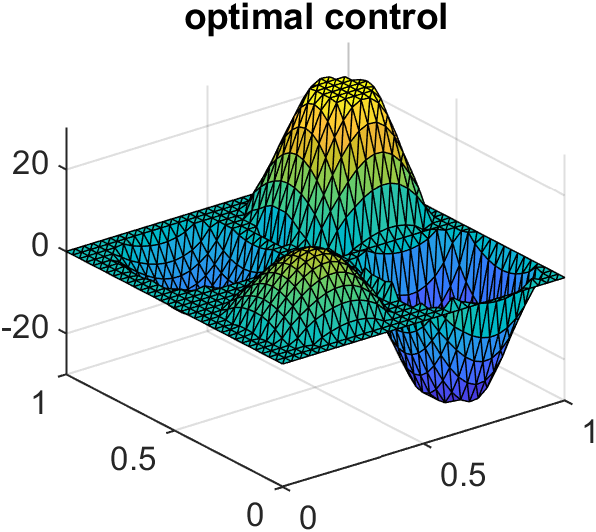}
\caption{$\beta = 0.1\beta_c$}
\end{subfigure}
\begin{subfigure}[t]{0.4\linewidth}
\centering
\includegraphics[trim={0 0 0 0.7cm},clip,width=\linewidth]{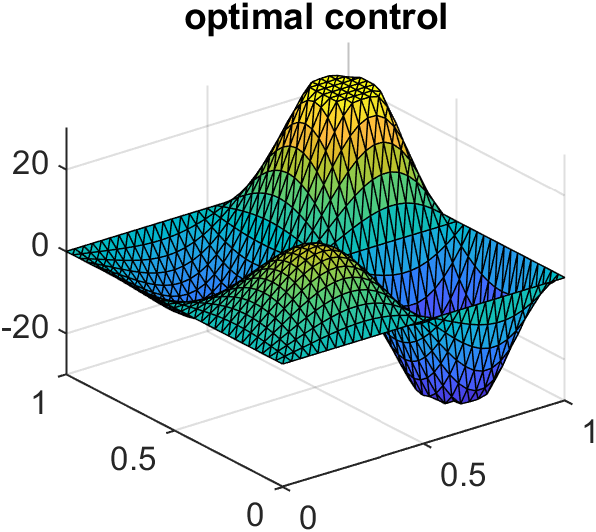}
\caption{$\beta = 0$}
\end{subfigure}
\caption{Optimal control output for Example \ref{ex:pde1} for various values of $\beta$}
\label{fig:pde13}
\end{figure}

\begin{figure}[!ht]
\centering
\begin{subfigure}[t]{0.32\linewidth}
\centering
\includegraphics[trim={0 0 0 0.7cm},clip,,width=\linewidth]{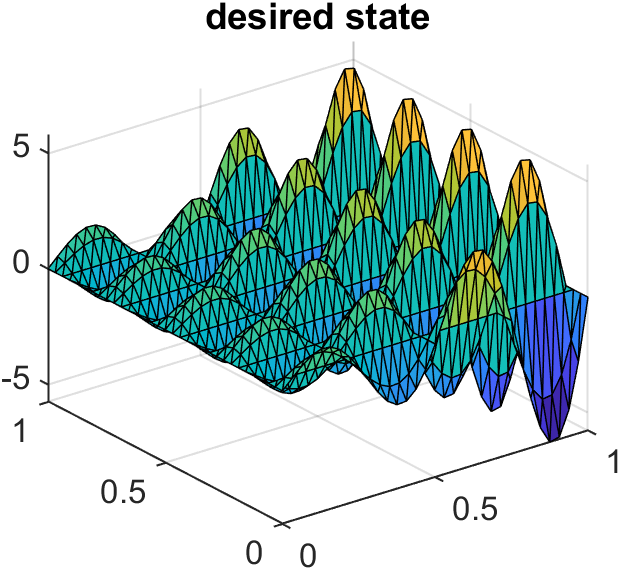}
\caption{Desired state}
\label{fig:one}
\end{subfigure}
\hfill
\begin{subfigure}[t]{0.32\linewidth}
\centering
\includegraphics[trim={0 0 0 0.7cm},clip,width=\linewidth]{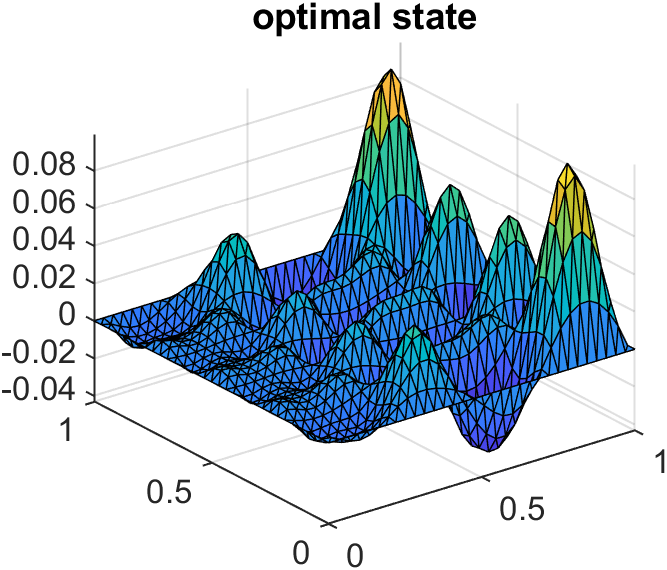}
\caption{Optimal state}
\label{fig:tw}
    
\end{subfigure}
\hfill
\begin{subfigure}[t]{0.32\linewidth}
\centering
\includegraphics[trim={0 0 0 0.7cm},clip,width=\linewidth]{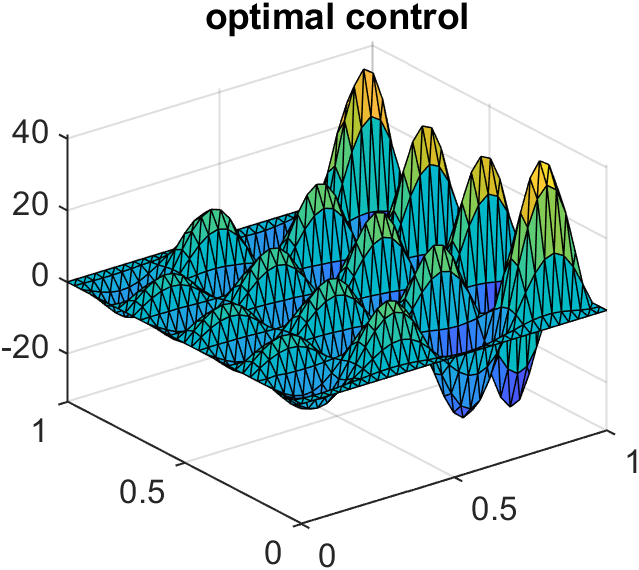}
\caption{Optimal control}
\label{fig:three}
\end{subfigure}
\caption{Desired state, optimal state, and optimal control output of I-ADCA-PDECO for Example~\ref{ex:pde2}}
\label{fig:pde2}
\end{figure}

\section{Concluding Remarks and Future Research}\label{conc}

In this work, we designed and justified an inexact adaptive DC algorithm in Hilbert spaces and applied it to sparse elliptic optimal control problems with an $L^{1-2}$-type regularization. The proposed scheme called I-ADCA (Algorithm~\ref{algo:i-adca}) allows both inexact subgradients and inexact solutions of the convex subproblems while preserving global convergence to stationary points and, under a Polyak-\L ojasiewicz-type property and mild regularity assumptions, guaranteeing explicit convergence rates. Specializing the designed algorithm to PDE-constrained optimization led us to develop the I-ADCA-PDECO method (Algorithm~\ref{algo:i-adca-pdeco}), for which we established well-posedness, uniqueness of the subproblems, and finite element error estimates. The conducted numerical experiments illustrated the ability of I-ADCA-PDECO to compute sparse controls with good performance in a benchmark distributed control problem.

Among major directions of our future research, we mention further investigations of the convergence rates for the inexact adaptive DC algorithm I-ADCA under appropriate versions of the Polyak-\L ojasiewicz property, especially in the case of lower exponents $\theta\in(0,1/2)$. We also plan to develop our approach to design and justify an appropriate inexact adaptive counterpart of the {\em boosted DCA} (BDCA) in Hilbert spaces; cf. \cite{AFV18,bmmn25} for exact versions in finite dimensions. Our future interests in this direction include the design and complexity analysis of acceleration versions of the inexact infinite-dimensional BDCA with both nonsmooth components, which exhibit significant new features in comparison with partially smooth counterparts; see \cite{fmss26} for the plain framework in finite dimensions. The proposed algorithmic developments will be tested in applications to new classes of regularized problems of PDE-constrained optimization to promote their sparsity and reduce computational costs.

\section*{Acknowledgments} The authors are grateful to Eduardo Casas and Vu Huu Nhu for useful discussions on optimal control of elliptic PDE systems.

\small


\begin{thebibliography}{99}

\bibitem{Antil} H. Antil, {\em PDE Constrained Optimization: A Generic Implementation of PDE Constrained Optimization Algorithms} (2018), available at https://bitbucket.org/harbirantil/pde\_constrained\_opt.

\bibitem{amp24} F. J. Arag\'on-Artacho, B. S. Mordukhovich and  P. P\'erez-Aros, {\em Coderivative-based semi-Newton method in nonsmooth difference programming}, Math. Program., 213 (2025), pp. 385--432.

\bibitem{AFV18} F. J. Arag\'on-Artacho and P. T. Vuong, {\em The boosted DC algorithm for nonsmooth functions}, SIAM J. Optim., 30 (2020), pp. 980--1006. 


\bibitem{AB09} H. Attouch and J. Bolt\'e, {\em On the convergence of the proximal algorithm for nonsmooth functions involving analytic features}, Math. Program., 116 (2009), pp. 5--16.

\bibitem{Auchmuty89} G. Auchmuty, {\em Duality algorithms for nonconvex variational principles}, Numer. Func. Anal. Optim., 10 (1989), pp. 211--264.

\bibitem{BMNT22} A. Bajaj, B. S. Mordukhovich, N. M. Nam and T. Tran, {\em Solving multifacility location problems by DC algorithms}, Optim. Meth. Softw., 37 (2022), pp. 338--360.

\bibitem{bmmn25} G. Bento, B. S. Mordukhovich, T. Mota and Yu. Nesterov, {\em Convergence of descent optimization algorithms under Polyak-\L ojasiewicz-Kurdyka conditions}, J. Optim. Theory Appl., 207:41 (2025). 

\bibitem{BDLM09} J. Bolt\'e, A. Daniilidis, O. Ley and L. Mazet, {\em Characterizations of \L ojasiewicz inequalities: Subgradient flows, talweg, convexity}, Trans. Amer. Math. Soc., 362 (2009),  pp. 3319--3363.

\bibitem{CasasSIOPT12} E. Casas, R. Herzog and G. Wachsmuth, {\em Optimality conditions and error analysis of semilinear elliptic control problems with $L^1$ cost functional}, SIAM J. Optim., 22 (2012), pp. 795--820.

\bibitem{Ciarlet75} P. G. Ciarlet, \emph{Lectures on the Finite Element Method}, Lectures on Mathematics and Physics, Vol. 49, Tata Institute of Fundamental Research, Bombay, 1975.

\bibitem{HandbookNA} E. Casas, K. Kunisch and F. Tr\"oltzsch, \emph{Optimal control of PDEs and FE-approximation}, in Numerical Control Part A, Handb. Numer. Anal. 23, E. Tr\'elat and E. Zuazua, eds., North-Holland, Amsterdam, pp. 115--163, 2022.

\bibitem{Evans} L. C. Evans, {\em Partial Differential Equations}, Amer. Math. Soc, Providence, RI, 2010.  

\bibitem{fmss26} O. P. Ferreira, B. S. Mordukhovich, W. M. S. Santos and J. C. de O. Souza, {\em An inexact boosted difference of convex algorithm for nondifferentiable functions},  J. Optim. Theory Appl., 208:71 (2026).

\bibitem{Glowinski} R. Glowinski, \emph{Numerical Methods for Nonlinear Variational Problems}, Springer, New York, 1984.

\bibitem{LojaHilbert11} A. Haraux and M. A. Jendoubi, {\em The \L ojasiewicz gradient inequality in the infinite-dimensional Hilbert space framework}, J. Funct. Anal., 260 (2011), pp. 2826--2842.

\bibitem{hinter02} M. Hinterm\"uller, K. Ito and  K. Kunisch, {\em The primal-dual active set strategy as a semismooth Newton method}, SIAM J. Optim., 13 (2002), pp. 865--888.

\bibitem{Ulbrich09} M. Hinze, R. Pinnau, M. Ulbrich and S. Ulbrich, {\em Optimization with PDE Constraints}, Springer, New York, 2009.

\bibitem{Yen24} V. T. Huong, D. T. K. Huyen and N. D. Yen, {\em Generalized polyhedral DC optimization problems}, J. Optim. Theory Appl., 207:11 (2025).

\bibitem{KMT-OMS} P. D. Khanh, B. S. Mordukhovich and D. B. Tran, {\em A new inexact gradient descent method with applications to nonsmooth convex optimization}, Optim. Meth. Softw., doi: 10.1080/10556788.2024.2322700 (2024).

\bibitem{Knabner} P. Knabner and L. Angermann, \emph{Numerical Methods for Elliptic and Parabolic Partial Differential Equations}, Springer, New York, 2003.

\bibitem{Kurdyka} K. Kurdyka, {\em On gradients of functions definable in $o$-minimal structures}, Ann. Inst. Fourier,
48 (1998), pp. 769--783.

\bibitem{30yearDCA} H. A. Le Thi and T. Pham Dinh, {\em DC programming and DCA: Thirty years of developments}, Math. Program., 169 (2018), pp. 5--68.

\bibitem{openDCA} H. A. Le Thi and T. Pham Dinh, {\em Open issues and recent advances in DC programming and DCA}, J. Global Optim., 88 (2024), pp. 533--590.

\bibitem{Yen23} Y. Lim, H. N. Tuan and N. D. Yen, {\em DC algorithms in Hilbert spaces and the solution of indefinite
infinite-dimensional quadratic programs}, J. Global Optim., https://doi.org/10.1007/s10898-023-01331-7 (2023).

\bibitem{Lojasiewicz} S. \L ojasiewicz, {\em Une propri\'et\'e topologique des sous-ensembles analytiques r\'eels}, Coll. du CNRS, Les \'equations aux
d\'eriv\'ees partielles, 117 (1963), pp. 87--89.

\bibitem{Mordukhovich06} B. S. Mordukhovich, {\em Variational Analysis and Generalized Differentiation, I: Basic Theory}, Springer, Berlin, 2006.

\bibitem{Mordukhovich24} B. S. Mordukhovich, {\em Second-Order Variational Analysis in Optimization, Variational Stability, and Control: Theory, Algorithms, Applications}, Springer, Cham, Switzerland, 2024.

\bibitem{nam} B. S. Mordukhovich and N. M. Nam, {\em Convex Analysis and Beyond, I: Basic Theory}, Springer, Cham, Switzerland, 2022.

\bibitem{Nesterov} Yu. Nesterov, {\em A method of solving a convex programming problem with convergence rate $\mathcal{O}(1/k^2)$}, Soviet Math. Dokl., 27 (1983), pp. 372--376.

\bibitem{DCA97} T. Pham Dinh and H. A. Le Thi, {\em Convex analysis approach to D.C. programming: Theory, algorithms, and applications}, Acta Math. Vietnam., 22 (1997), pp. 289--355.

\bibitem{PHLH22} T. Pham Dinh, V. N. Huynh, H. A. Le Thi and V. T. Ho, {\em Alternating DC algorithm for partial DC programming
problems}, J. Global Optim., 82 (2022), pp. 897--928.

\bibitem{ijcai} D. N. Phan, H. M. Le and H. A. Le Thi, {\em Accelerated difference of convex functions algorithm and its application to sparse binary logistic regression}, In: 27th International Joint Conference on Artificial Intelligence and 23rd European Conference on Artificial Intelligence (IJCAI-ECAI 2018), Stockholm, Sweden, pp. 1369--1375.

\bibitem{PL24MOR} D. N. Phan and H. A. Le Thi, {\em Difference-of-convex algorithm with extrapolation for nonconvex, nonsmooth optimization problems}, Math. Oper. Res., 49 (2024), pp. 1973--1985.

\bibitem{Polyak} B. T. Polyak, {\em Gradient methods for the minimization of functionals}, USSR Comput. Math. Math. Phys., 3 (1963), pp. 864--878.

\bibitem{polyak} B. T. Polyak, {\em Some methods of speeding up the convergence of iteration methods},  USSR Comput. Math. Maths. Phys., 4 (1964), pp. 1--17.

\bibitem{Reyes} J. C. De los Reyes, {\em Numerical PDE-Constrained Optimization}, Springer, Cham, Switzerland, 2015.

\bibitem{Stadler09} G. Stadler, \emph{Elliptic optimal control problems with $L^1$-control cost and applications for the
placement of control devices}, Comput. Optim. Appl., 44 (2009), pp. 159--181.

\bibitem{Troltzsch} F. Tr\"oltzsch, {\em Optimal Control of Partial Differential Equations: Theory, Methods and Applications}, American Mathematical Society, Providence, RI, 2010.

\bibitem{Ulbrich11} M. Ulbrich, {\em Semismooth Newton Methods
for Variational Inequalities and Constrained Optimization Problems in Function Spaces}, SIAM, Philadelphia, PA, 2011.

\bibitem{Wachsmuth09} G. Wachsmuth and D. Wachsmuth, \emph{Convergence and regularization results for optimal control problems with sparsity functional}, ESAIM: Control Optim. Calc. Var., 17 (2011), pp. 858--886.

\bibitem{Lou} P. Yin, Y. Lou, Q. He and J. Xin, {\em Minimization of $\ell_{1-2}$ for compressed sensing}, SIAM J. Sci. Comput., 37 (2015), pp. A536--A563.

\bibitem{Zalinescu} C. Z$\breve{{\rm a}}$linescu, {\em Convex Analysis in General Vector Spaces}, World Scientific Publishing, Singapore, 2002. 

\bibitem{ZSYD24} Y. Zhang, X. Song, B. Yu and M. Ding, {\em An iDCA with sieving strategy for PDE-constrained
optimization problems with $L^{1-2}$-control cost}, J. Sci. Comput., 99:24 (2024).
    
\end{thebibliography}
\end{document}